\newtheorem*{theorem*}{Theorem}
\newtheorem{theorem}{Theorem}
\numberwithin{theorem}{section}
\newtheorem{proposition}[theorem]{Proposition}
\newtheorem{lemma}[theorem]{Lemma}
\newtheorem{corollary}[theorem]{Corollary}
\newtheorem{definition}[theorem]{Definition}
\newtheorem{remark}[theorem]{Remark}
\newtheorem{problem}[theorem]{Problem}
\newtheorem{example}[theorem]{Example}
\title{Tropical Graph Curves}
\author{Madhusudan Manjunath\footnote{Part of this work was carried out while the author was affiliated to UC, Berkeley, QMUL London and while visiting the IHES and ICTP. He was supported by the Feoder-Lynen Fellowship of the Humboldt Foundation. He thanks the generous support and the warm hospitality of these institutions.}}
\begin{document}
\maketitle

\begin{abstract}
We study tropical line arrangements associated to a three-regular graph $G$ that we refer to as \emph{tropical graph curves}. Roughly speaking, the tropical graph curve associated to $G$, whose genus is $g$, is  an arrangement of $2g-2$ lines in tropical projective space that contains $G$ (more precisely, the topological space associated to $G$) as a deformation retract. We show the existence of tropical graph curves when the underlying graph is a three-regular, three-vertex-connected planar graph.  
Our method involves explicitly constructing an arrangement of lines in projective space, i.e. a graph curve whose tropicalisation yields the corresponding tropical graph curve and in this case, solves a topological version of the tropical lifting problem associated to canonically embedded graph curves. We also show that the set of tropical graph curves that we construct are connected via certain local operations. These local operations are inspired by Steinitz' theorem in polytope theory.
\end{abstract}
\section{Introduction}
Tropical Geometry provides a framework to translate questions about an algebraic variety to questions about a polyhedral object associated to it called its \emph{tropicalisation}. In its most basic form, the framework is as follows. 

Let $\mathbb{K}$ be a non-archimedean field, i.e. an algebraically closed field with a non-trivial non-archimedean valuation ${\rm val}$ and complete with respect to it.  Let $X$ be a very affine variety over $\mathbb{K}$, i.e. a subvariety of the split torus $(\mathbb{K}^{\star})^n$. The tropicalisation map ${\rm trop}$ takes a point $(p_1,\dots,p_n)$ in $(\mathbb{K}^{\star})^n$ to its coordinatewise valuations $({\rm val}(p_1),\dots,{\rm val}(p_n)) \in \mathbb{R}^n$. The tropicalisation of $X$, denoted by ${\rm trop}(X)$ is then obtained by applying the map ${\rm trop}$ to every point in $X$ and taking the closure with respect to the Euclidean topology on $\mathbb{R}^n$.  

The notion of tropicalisation can be extended in two ways: i. For very affine varieties over arbitrary algebraically ground fields equipped with the trivial valuation, the notion of tropicalisation described above is not satisfactory. In this case, either an alternative description of tropicalisation in terms of initial ideals \cite[Item (2),Theorem 3.2.3]{MacStu15} or a base change to a field with a non-trivial valuation extending this trivial valuation is used \cite[Theorem 3.2.4]{MacStu15} ii.  the notion of tropicalisation has been extended to arbitrary subvarieties of toric varieties (over algebraically closed, valued fields),  referred to as the \emph{Kajiwara-Payne extended tropicalisation} \cite{Pay05}, \cite[Chapter 6]{MacStu15}.

{\bf Tropical Graph Curves:} The protagonists in this paper are \emph{tropical graph curves}. Informally, a tropical graph curve $\mathbb{T}_G$ associated to a three-regular, connected, simple graph $G$ of genus $g$ is an arrangement of $2g-2$ tropical lines in tropical projective space $\mathbb{TP}^{g-1}$ (equipped with the Euclidean topology) that contains $G$ as a deformation retract. Tropical graph curves are  tropical line arrangements in tropical projective space.  Tropical hyperplane arrangements have recently considerable attention in literature, see for example \cite{ArdDev09,JohKam17}.  On the other hand, tropical line arrangements have not received as much attention. We refer to \cite[Theorem C]{BraJonLeeRan18} for a ``universality'' property of tropical line arrangements in the plane. Our main result (see Corollary \ref{tropgra_cor}) can be viewed as a construction of tropical line arrangements that are homeomorphic to a given simple, three-regular, three-connected planar graph. 

Our main motivation for studying tropical graph curves arises from the tropical lifting problem that we introduce in the following.

{\bf Tropical Lifting:} The Bieri-Groves theorem  \cite{BieGro84}, \cite[Chapter 3, Section 3]{MacStu15}, a fundamental theorem in tropical geometry, states that ${\rm trop}(X)$ is a piecewise linear subset of $\mathbb{R}^n$. Hence,  ${\rm trop}(X)$  can be studied via polyhedral geometry and applications of tropical geometry crucially use this polyhedral property. For most of these applications, an understanding of piecewise linear subsets of $\mathbb{R}^n$ that arise as tropicalisations is essential, see \cite{Spe14} for more details. This gives rise to the tropical lifting problem.

\begin{problem}{\rm({\bf Tropical Lifting Problem})} Let $\mathbb{K}$ be an algebraically closed, valued field. Characterise piecewise linear subsets of $\mathbb{R}^n$ (with finitely many pieces) that can be lifted, i.e. obtained as the tropicalisation of a  very affine variety over $\mathbb{K}$ or more generally, as the Kajiwara-Payne extended tropicalisation of a subvariety of a toric variety over $\mathbb{K}$.\end{problem}


 The one-dimensional case, i.e. lifting piecewise linear subsets of $\mathbb{R}^n$ of dimension one is already highly non-trivial. Two necessary conditions are that every edge must have \emph{rational slope} and that the set must satisfy the \emph{balancing condition} (also known as the zero-tension condition): there is an assignment of a positive integer called \emph{multiplicity} to each edge such that at every vertex, the sum of the outgoing slopes (where each outgoing slope is represented by a primitive point in $\mathbb{Z}^2$) of the edges incident on it  weighted by the corresponding multiplicity must be zero.  A piecewise linear subset of $\mathbb{R}^n$ satisfying these two necessary conditions is called a \emph{tropical curve} \cite[Section 2]{Mik05} and \cite[Section 1.3]{MacStu15}.   By the \emph{genus} of a tropical curve, we mean its first Betti number when viewed as a metric graph (allowing infinite edge lengths), see \cite[Definition 2.9]{Mik05}. 
 
 The tropical lifting problem for tropical curves is wide open in general. The case of genus zero tropical curves is relatively well understood owing to the work of Mikhalkin \cite[Corollary 3.16]{Mik05} for $n=2$ and to the works of  Nishinou and Siebert \cite[Section 7]{NisSie06} and Speyer \cite[Theorem 3.2]{Spe14} for arbitrary $n$.  Lifting genus one tropical curves was initiated by Speyer \cite[Theorem 3.2]{Spe14}, also see Nishinou \cite[Theorem 2]{Nis09}, Tymokin \cite{Tym12}, Ranganathan \cite[Theorems B and C]{Ran17} for further work.  Katz \cite[Theorem 1.1]{Kat12} introduced necessary conditions for lifting tropical curves of arbitrary genus that generalises Speyer's condition and Nishinou's condition (both for genus one tropical curves and both called ``well-spacedness").

Since the tropical lifting problem is still wide open, studying weaker versions of the problem seems natural. One such weakening is the following: Classify metric graphs $\Gamma$ such that there is a tropical curve $T$ that contains $\Gamma$ as a  deformation retract and can be lifted to a smooth algebraic curve over $\mathbb{K}$. Using the work of Baker, Payne and Rabinoff \cite[Theorem 1.1 and Theorem 5.20]{BakPayRab16} \footnote{These results are stated in terms of ``faithful tropicalisation'', an important notion in the interplay between tropical geometry and non-archimedean geometry.
}, it follows that any metric graph $\Gamma$ whose edge lengths are in the value group of $\mathbb{K}$ satisfies this property with the corresponding tropical curve $T$ being contained in $\mathbb{R}^n$ for a possibly ``high" $n$. Cheung, Fantini, Park and Ulirsch \cite[Theorem 1.2]{CheFanPakUlr15} further refined this result by showing an effective upper bound  on $n$:  the maximum of three and the valence of a vertex $v$ minus one over all vertices $v$ of $\Gamma$. In \cite{CheFanPakUlr15}, the ground field $\mathbb{K}$ is the field of Puiseux series with coefficients in $\mathbb{C}$ and hence, the edge lengths are required to be rational. 
Jell \cite{Jell18} introduced a strengthening of the notion of faithful tropicalisation to so called \emph{fully faithful tropicalisation} and showed that every Mumford curve over  $\mathbb{K}$ admits such a fully faithful tropicalisation.

{\bf Tropical Lifting for Canonical Curves:} From the viewpoint of applications of tropical geometry, lifting to specific classes of algebraic curves is important. One such class is that of \emph{smooth canonical curves}: embeddings of a smooth, proper, non-hyperelliptic algebraic curve into projective space via the global sections of its canonical line bundle. Recall that for any integer $g \geq 3$, a smooth curve in projective space $\mathbb{P}^{g-1}$ is a canonical curve of genus $g$ if and only if it is non-degenerate (not contained in any hyperplane) and has degree $2g-2$ \cite[Theorem 9.3 and Section 9C, Exercise 5]{Eis08}.  We refer to \cite[Chapter 9]{Eis08} for applications of canonical curves. 

The lifting problem of metric graphs to smooth canonical curves takes the following form: 
Classify metric graphs $\Gamma$ such that there is a smooth canonical curve whose tropicalisation  deformation retracts to $\Gamma$.   A classification is wide open, we refer to \cite{BroJosMorStu15} and \cite{HahMarRenTyo18} for progress in the case of genus three metric graphs.   A further weakening leads to a topological version of the problem where only the topological space underlying the metric graph is taken into account. Given an undirected, connected graph $G$ (possibly with multiedges but with no loops), we denote the topological space underlying (any of) its geometric realisations (metric graphs whose underlying graph is $G$) by $G^{\rm top}$.


\begin{problem}{\rm({\bf Topological Tropical Lifting Problem for Smooth Canonical Curves})} Classify graphs $G$ such that there exists a smooth canonical curve whose tropicalisation (in the extended sense) deformation retracts to $G^{\rm top}$. \end{problem}

Even in this topological version, a complete classification is wide open. We refer to \cite[Theorem 3.2]{ChaJir15} for the case when $G$ is the complete graph on four vertices. In the current article, we study the topological tropical lifting problem for certain non-smooth canonical curves, namely canonical embeddings of certain reducible nodal curves called \emph{graph curves}.  We refer to the work of Bayer and Eisenbud \cite{BayEis91} for an introduction to this topic.  

For a simple \footnote{we shall keep this hypothesis throughout the article.}, three-regular, connected graph $G$, the graph curve $X_G$ associated to it is the totally degenerate, nodal curve whose dual graph is $G$.  By totally degenerate, we mean that each irreducible component is isomorphic to the projective line.
The dual graph of a (reducible) curve is the graph whose vertices correspond to its irreducible components and there is an edge between two vertices if their corresponding components intersect. The three-regularity condition on the dual graph $G$ ensures that 
the graph curve $X_G$ is independent of the choice of the nodes (thanks to the three transitivity of the action of the automorphism group of $\mathbb{P}^1$ on $\mathbb{P}^1$).  We address the topological tropical lifting problem for canonical embeddings of the graph curve $X_G$ where $G$ is any three-regular, three-edge-connected planar graph. Before this, note that tropical projective space carries the Euclidean topology (see Subsection \ref{tropproj_sect} for more details) and hence, every extended tropicalisation into it carries the induced topology.  Our main theorem is the following:

\begin{theorem}\label{tropcan_theo} {\rm ({\bf Tropical Lifting for Planar Graph Curves})} Let $\kappa$ be an algebraically closed, valued field.  For every three-regular, three-edge-connected planar graph $G$, there is a canonical embedding of the corresponding graph curve $X_G$ over $\kappa$ whose extended tropicalisation (with respect to the given valuation on $\kappa$) is homeomorphic to $G^{\rm top}$.  \end{theorem} 

To the best of our knowledge, the tropical lifting problem for singular algebraic curves has not been studied before. We say that a canonical embedding of $X_G$ admits a \emph{weakly faithful tropicalisation} or equivalently, that the tropicalisation of this canonical embedding is \emph{weakly faithful} if its tropicalisation (in the extended sense and with respect to the given valuation on $\kappa$) contains $G^{\rm top}$ as a deformation retract. This terminology is justified by the fact that the Berkovich analytification of $(X_G)_{\rm Berk}$ contains $G^{\rm top}$ as a deformation retract.  The space $(X_G)_{\rm Berk}$ is constructed as follows: consider one copy of the Berkovich projective line $(\mathbb{P}^1_{\rm Berk})_u$  for each vertex $u$ of $G$. For each edge $e=(u,v)$ of $G$, note that there is a node $n_e$ of $X_G$, and identify $(\mathbb{P}^1_{\rm Berk})_u$ and  $(\mathbb{P}^1_{\rm Berk})_v$ at the type I points corresponding to $n_e$ \cite[Chapter 4]{Ber90} and \cite{BakPayRab13}.

 The extended tropicalisation of the canonical embedding of $X_G$ promised by Theorem \ref{tropcan_theo}  is an example of a tropical graph curve.  As a corollary,  we obtain the existence of tropical graph curves corresponding to three-regular, three-edge-connected planar graphs.

\begin{corollary} \label{tropgra_cor} Any three-regular, three-edge-connected planar graph has a tropical graph curve associated to it.\end{corollary}

   For a three-regular graph, three-edge-connectivity is equivalent to three-vertex-connectivity \cite[Lemma 2.6]{BayEis91}.  Hence, in this context we will use the term ``three-connected'' for three-edge-connected. In the following, we outline the key steps in the proof of Theorem \ref{tropcan_theo}.

\subsection{Key Ingredients of the Proof of Theorem \ref{tropcan_theo}}

We explicitly construct a canonical embedding of $X_G$ and show that its extended tropicalisation is weakly faithful. We refer to this embedding as the \emph{sch\"on embedding} \footnote{This is not to be confused for sch\"on compactifications \cite[Definition 6.4.19.]{MacStu15}} of $X_G$, denoted by $X_G^{\rm sch}$.  The sch\"on embedding can be described in geometric terms as follows.
Since $G$ is a three-vertex-connected planar graph, by Steinitz' theorem ((\cite[Chapter 4]{Zie07})),  it is the one-skeleton of a three-dimensional polytope $P$. Furthermore, since $G$ is three-regular, the polar $P^{\vee}$ of $P$ is a simplicial polytope.  Consider the Stanley-Reisner surface of the simplicial complex associated to $P^{\vee}$.  The sch\"on embedding is a hyperplane section of this surface.  We refer to Proposition \ref{scho-stanries_theo} for more details. We also refer to Bayer and Eisenbud \cite[Section 6]{BayEis91} where general hyperplane sections of this Stanley-Reisner surface have been studied. 

We study the extended tropicalisation of $X_G^{\rm sch}$ in terms of the primary decomposition of its defining radical ideal. The following explicit description of this primary decomposition plays an important role.  The primary decomposition of the sch\"on embedding is constructed in terms of a planar embedding of $G$.  The Euclidean closure of the unique unbounded component in the complement of $G$ in $\mathbb{R}^2$ is called the \emph{exterior face}. The Euclidean closures of the other components are called \emph{interior faces}. By Euler's formula for planar graphs, there are precisely $g$ interior faces of a planar embedding of $G$, where $g$ is the genus (also known as the first Betti number) of $G$.  We identify the homogenous coordinate ring of $\mathbb{P}_{\kappa}^{g-1}$ with $\kappa[x_F|$~{\rm over all interior faces $F$ of $G$}] (equipped with its standard grading).    Furthermore, any canonical embedding of a graph curve $X_G$ of arithmetic genus $g$ (also, equal to the genus of $G$) is an arrangement of $2g-2$ lines in $\mathbb{P}_{\kappa}^{g-1}$ \cite[Proposition 1.1 (and its proof)]{BayEis91}.  A three-regular graph of genus $g$ has $2g-2$ vertices and $3g-3$ edges.   The $2g-2$ vertices are in bijection with the irreducible components of $X_G$. 

To each vertex $v$ of $G$, we associate a line in  $\mathbb{P}_{\kappa}^{g-1}$ defined by an ideal $L_v$ corresponding to it.  This line is the irreducible component of $X_G$ corresponding to $v$. Note that $L_v$ is given by $g-2$ linearly independent linear forms.  We distinguish between two types of vertices, namely \emph{interior} and the \emph{exterior} vertices.  An interior vertex is a vertex that is not incident on the exterior face. Otherwise, the vertex is called an exterior vertex. Note that  an interior vertex has precisely three interior faces incident on it whereas an exterior vertex has precisely two interior faces incident on it.

For an interior vertex $v$, the line $L_v$ is cut out by the linear form $x_{F_i}+x_{F_j}+x_{F_k}$ where $F_i,F_j,F_k$ are the three interior faces that are incident on it and by the variables $x_F$ over all interior faces $F \notin 
\{ F_i,F_j,F_k\}$. Note that we have specified $g-2$ linearly independent linear forms. For an exterior vertex $v$, the line $L_v$ is generated by the variables $x_F$ over all interior faces $F$ that are not incident on $v$.  Here again, we have specified $g-2$  linearly independent linear forms.

\begin{figure}
  \includegraphics[width=12cm]{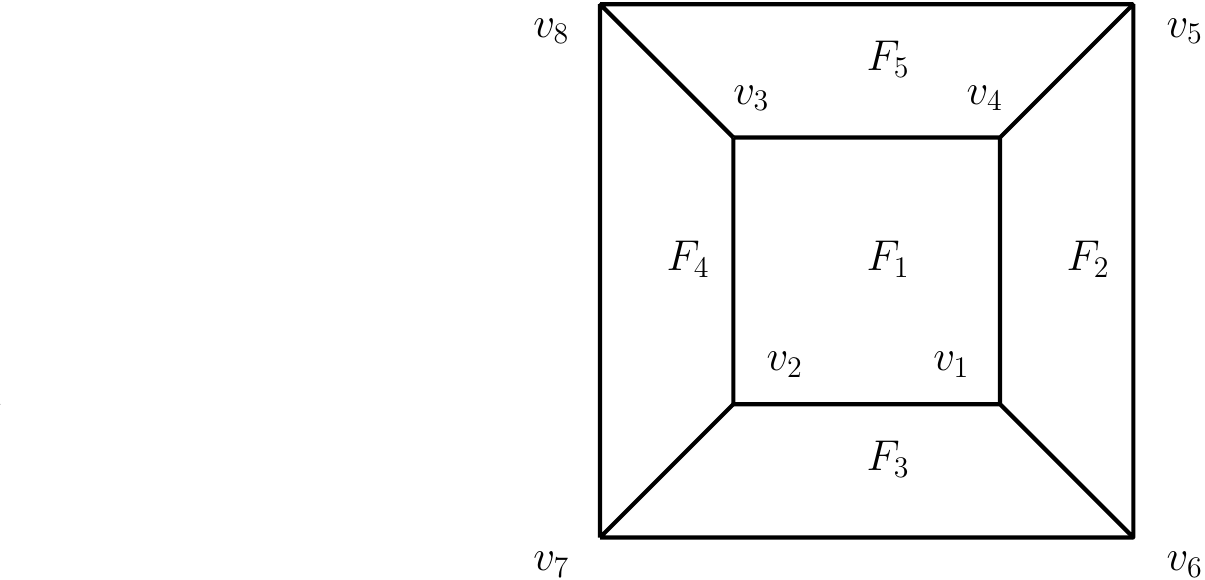}
  \caption{A planar embedding of the cube.}\label{cube}
\end{figure}

\begin{example}  \rm  Consider the one-skeleton of a cube (as shown in Figure \ref{cube}). Here $g=5$ and any canonical embedding of the associated graph curve is an arrangement of eight lines in $\mathbb{P}^4_{\kappa}$.   In this case, $v_1,v_2,v_3$ and $v_4$ are interior vertices and the others are exterior vertices.   The eight lines are the following:

{
 \large
 \begin{center}
$L_{v_1}=\langle x_{F_1}+x_{F_2}+x_{F_3}, x_{F_4}, x_{F_5} \rangle$,\\
$L_{v_2}=\langle x_{F_1}+x_{F_3}+x_{F_4}, x_{F_2}, x_{F_5} \rangle$,\\
$L_{v_3}=\langle x_{F_1}+x_{F_4}+x_{F_5}, x_{F_2}, x_{F_3} \rangle$,\\
$L_{v_4}=\langle x_{F_1}+x_{F_2}+x_{F_5}, x_{F_3}, x_{F_4} \rangle$,\\
$L_{v_5}=\langle x_{F_1}, x_{F_3}, x_{F_4} \rangle$, $L_{v_6}=\langle x_{F_1}, x_{F_4}, x_{F_5} \rangle$,\\
$L_{v_7}=\langle x_{F_1}, x_{F_2}, x_{F_5} \rangle$, $L_{v_8}=\langle x_{F_1}, x_{F_2}, x_{F_3}\rangle$.
\end{center}}
 \qed
\end{example}

We consider the extended tropicalisation ${\rm tropproj}(X_G^{\rm sch})$ of the resulting arrangement of lines. This is an arrangement of $2g-2$ tropical lines $\{{\rm tropproj}(L_v)\}_v$ in $\mathbb{TP}^{g-1}$ that we denote by $\mathcal{T}$.  We refer to Figure \ref{exconn_fig} for examples. We identify $\mathbb{TP}^{g-1}$ with a $(g-1)$-simplex, see Subsection \ref{tropproj_sect} for more details.   We construct a homeomorphism $\phi$ between $G^{\rm top}$  and  $\mathcal{T}$ (Subsection \ref{homeoproof_subsect}). In the following, we identify  key properties of $\mathcal{T}$ that go into the construction of $\phi$. The tropical lines ${\rm tropproj}(L_u)$ and ${\rm tropproj}(L_v)$ intersect if and only if $u$ and $v$ are adjacent in $G$ (Lemma \ref{troplinesinter_prop}).  If $u$ is an interior vertex, then ${\rm tropproj}(L_u)$ contains precisely one branch point and this is of valence three.  If $u$ is an exterior vertex, then ${\rm tropproj}(L_u)$ is an edge of $\mathbb{TP}^{g-1}$ and hence, consists  only of bivalent points. These properties lead to the following classification of the points of $\mathcal{T}$ (Lemma \ref{trivalchar_lem}) that we summarise in the following. The points of $\mathcal{T}$ are either bivalent or trivalent. The trivalent points of $\mathcal{T}$ are exclusively of the following two types: i. Branch point $b_u$ of ${\rm tropproj}(L_u)$ where $u$ is an interior vertex. ii. The intersection point $\zeta_{(u,\iota(u))}$ of ${\rm tropproj}(L_u)$ and ${\rm tropproj}(L_{\iota(u)})$ where $u$ is an exterior vertex and $\iota(u)$ is the unique interior vertex adjacent to it (Item \ref{th:five}, Proposition \ref{grapro_prop}).  With this information at hand, we define $\phi$ on the vertex set of $G^{\rm top}$ as follows: 

\begin{equation}
\phi(u)=
\begin{cases}
b_u,\text{ if $u$  is an interior vertex},\\
\zeta_{(u,\iota(u))}, \text{ if $u$ is an exterior vertex}.
\end{cases}
\end{equation}
With some additional effort, this definition can be extended to $G^{\rm top}$ yielding the homeomorphism $\phi$, we refer to Subsection \ref{homeoproof_subsect} for more details. 

\subsection{Connectivity between Tropicalisations}

Steinitz' theorem (\cite[Chapter 4]{Zie07}, Theorem \ref{Stein_thm}) states that a graph is the one-skeleton of a three-polytope if and only if it is simple, planar and three-vertex-connected. 
A standard proof of this theorem shows a connectivity property of three-vertex-connected planar graphs with respect to an operation called the \emph{$\Delta$$Y$-transformation}.  Motivated by this, we prove a connectivity result between the extended tropicalisations of sch\"on embeddings (Section \ref{conn_sect}).  We define a tropical analogue of the notion of $\Delta$$Y$ (and $Y$$\Delta$) transformations. A \emph{tropical $\Delta$$Y$ transformation} is an operation that transforms a (certain type) of tropical line arrangement to another.  We also define an operation called \emph{contraction-elongation operation} and its tropical analogue, and show the following connectivity property.

\begin{theorem}\label{conn_theo} Let $G_1$ and $G_2$ be three-regular, three-connected planar graphs, and let $\mathcal{T}_{G_1}$ and $\mathcal{T}_{G_2}$ be the extended tropicalisations of $X_{G_1}^{\rm sch}$ and $X_{G_2}^{\rm sch}$, respectively. There exists a finite sequence consisting of tropical $\Delta$$Y$, tropical $Y$$\Delta$ and tropical contraction-elongation transformations that transforms   $\mathcal{T}_{G_1}$ to $\mathcal{T}_{G_2}$. \end{theorem}

One potential application of this connectivity result is in carrying out inductive arguments on the set $\{ \mathcal{T}_G\}_G$.


{\bf A Future Direction:} An approach to tropical lifting for (certain) smooth canonical curves by ``deforming'' the sch\"on embedding of $X_G$. It seems plausible that this deformation can be carried out via deformations of the associated Stanley-Reisner surface.

   {\bf Acknowledgement:}  We thank Bo Lin, Ralph Morrison and Bernd Sturmfels very much, this work has significantly benefitted  from the several discussions we had with them.  We have benefitted very much from our discussions with Lorenzo Fantini, particularly in relation to Berkovich spaces.  We thank  Omid Amini,  Erwan Brugall\'e, Alex Fink, Dhruv Ranganathan and Martin Ulirsch for the helpful discussions.

\section{Preliminaries}
\subsection{A Brief Interlude into Tropical Projective Space}\label{tropproj_sect}

We start by briefly recalling tropical projective space, we refer to \cite[Chapter 6, Section 2]{MacStu15}  for a detailed discussion. Analogous to its classical counterpart, tropical projective space in $n$-dimensions $\mathbb{TP}^n$ can be constructed in different ways,  we describe the one via compactification here.  This mimics the construction of projective space as a torus compactification. 

We consider the hyperplane $(1,\dots,1)^{\perp}$ where $(1,\dots,1) \in \mathbb{R}^{n+1}$ as a model for the tropical torus in $n$-dimensions. Each point in $(1,\dots,1)^{\perp}$ is a representative of an orbit of tropical multiplication of $\mathbb{R}$ on $\mathbb{R}^{n+1}$ via $\lambda \odot (q_0,\dots,q_n)=(\lambda+q_0,\dots,\lambda+q_n)$. We compactify it with the $(n+1)$-coordinate hyperplanes. It is convenient to think of these hyperplanes $\mathcal{H}_0,\dots,\mathcal{H}_{n}$, say as living at ``infinity''. In particular, $\mathcal{H}_i$ is the intersection of the affine copy of the hyperplane $(0,\dots,0,\underbrace{1}_{i},0,\dots,0)^{\perp}\cap (1,\dots,1)^{\perp}$ at ``infinity''. Hence, $\mathbb{TP}^n$ is homeomorphic to the $n$-simplex, also see \cite[Example 6.2.4 and Remark 6.2.5]{MacStu15}.  For each $i$, the $i$-dimensional faces of $\mathbb{TP}^n$ are in bijection with the $i$-dimensional orbits of the standard torus action on $\mathbb{P}^n$.  
 This identification is particularly useful for visualisation purposes.  Note that $\mathbb{TP}^n$ inherits a topology from the Euclidean topology on $\mathbb{R}^n$ that we also refer to as the \emph{Euclidean topology} on  $\mathbb{TP}^n$.

\subsubsection{Tropicalising into $\mathbb{TP}^n$} 

In the following, we briefly discuss tropicalisation of a subvariety of projective space into tropical projective space to fit our needs in the future sections.   We refer to \cite[Subsection 6.2]{MacStu15} for a more thorough treatment of this topic. 

Given a graded ideal $I$ of $\kappa[x_1,\dots,x_{n+1}]$ where $\kappa$ is an algebraically closed, valued field.  The \emph{extended (Kajiwara-Payne)  tropicalisation} ${\rm tropproj}(I)$ of $I$ (into $\mathbb{TP}^n$) is defined as the union of the tropicalisations of $I$ when restricted to each torus orbit of $\mathbb{P}^n$. 

Note that each torus orbit of $\mathbb{P}^n$ corresponds to a (possibly) empty subset $\mathcal{V}$ of $[1,\dots,n+1]$. Its coordinate ring is identified with the Laurent polynomial ring $\kappa[x_j^{\pm 1}| j\notin \mathcal{V}]$. The restriction of $I=\langle g_1,\dots,g_r \rangle$ to this torus orbit is the ideal $\tilde{I}=\langle \tilde{g_1},\dots,\tilde{g_r} \rangle$ of  $\kappa[x_j^{\pm 1}| j\notin \mathcal{V}]$ where $\tilde{g_j}$ is obtained from $g_j$ by setting each variable $x_j$ where $j \in \mathcal{V}$ to zero. 
%



Recall that a generating set $B$ of $I$ is called a \emph{tropical basis} for $I$ if $\cap_{f \in B}{\rm trop}(f)={\rm trop}(I)$ \cite[Definition 2.6.3]{MacStu15}. A  homogenous generating set $B$ of $I$ is called an \emph{extended tropical basis} for $I$ if $\cap_{f\in B}{\rm tropproj}(f)={\rm tropproj}(I)$. 


\subsubsection{Linear Subspaces of Tropical Projective Space} 
 
We primarily encounter tropicalisations of linear subspaces of projective space, in particular lines.  

\begin{definition}\cite[Definition 4.2.1]{MacStu15}
A $k$-dimensional tropicalised linear subspace in $\mathbb{TP}^n$ is defined as the extended tropicalisation ${\rm tropproj}(I)$ of the defining ideal $I$ of a $k$-dimensional linear subspace of $\mathbb{P}^{n}$. 
\end{definition}

In order to tropicalise an ideal into $\mathbb{TP}^n$,  knowing an extended tropical basis apriori is particularly useful. The linear subspaces we encounter in this paper are all defined by a set of linear forms with mutually disjoint support. For instance, the ideal $L_{v_2}=\langle x_{F_1}+x_{F_3}+x_{F_4}, x_{F_2}, x_{F_5} \rangle$ from the introduction.  Such a linear subspace has a particularly simple extended tropical basis.

\begin{lemma}\label{tropbas_lem}
Suppose that the ideal $I$ is generated by non-zero linear forms $\ell_1,\dots,\ell_r$ such that their supports are mutually disjoint. The set $\{\ell_1,\dots,\ell_r\}$ is an extended tropical basis for $I$. 
\end{lemma}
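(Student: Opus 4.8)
The plan is to show both inclusions that make $\{\ell_1,\dots,\ell_r\}$ a tropical basis of $I=\langle \ell_1,\dots,\ell_r\rangle$. One inclusion is free: since each $\ell_i\in I$, the definition $\mathrm{tropproj}(I)=\bigcap_{f\in I}\mathrm{tropproj}(f)$ as an intersection over \emph{all} of $I$ immediately gives $\mathrm{tropproj}(I)\subseteq\bigcap_{i=1}^r\mathrm{tropproj}(\ell_i)$. The entire content is the reverse inclusion: a point $w\in\bigcap_i\mathrm{tropproj}(\ell_i)$ must lie in $\mathrm{tropproj}(f)$ for \emph{every} $f\in I$, not merely for the generators. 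This is exactly where the disjointness of the supports is indispensable — for arbitrary generators of a linear ideal the analogous statement is false, the failure coming from cancellation when one forms combinations $\sum_i g_i\ell_i$.

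First I would settle the tropical torus $H_0$ using the standard initial-ideal description of tropical varieties, which is insensitive to whether the valuation is trivial or not: a weight $w$ lies in $\mathrm{trop}(I)$ if and only if $\mathrm{in}_w(I)$ contains no monomial, while $w\in\mathrm{trop}(\ell_i)$ if and only if $\mathrm{in}_w(\ell_i)$ is not a monomial. So it suffices to prove that, whenever every $\mathrm{in}_w(\ell_i)$ is a non-monomial, $\mathrm{in}_w(I)$ is monomial-free. The key step is that $\{\ell_1,\dots,\ell_r\}$ is a Gr\"obner basis of $I$ with respect to any term order refining $w$: the leading monomial of $\ell_i$ is a single variable of $\mathrm{supp}(\ell_i)=S_i$, and since the $S_i$ are pairwise disjoint these leading monomials are pairwise coprime, so Buchberger's coprimality criterion forces every $S$-polynomial to reduce to zero. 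Hence $\mathrm{in}_w(I)=\langle \mathrm{in}_w(\ell_1),\dots,\mathrm{in}_w(\ell_r)\rangle$. Each generator on the right is a linear form supported in the block $S_i$ with at least two terms; this ideal is prime (the forms have disjoint supports, hence are linearly independent and cut out a linear subspace), so it contains a monomial only if it contains some single variable $x_j$, i.e. only if $x_j$ lies in the span of the $\mathrm{in}_w(\ell_i)$ — impossible, since a non-trivial combination of forms with disjoint supports of size $\ge 2$ can never collapse to a single variable. Thus $\mathrm{in}_w(I)$ is monomial-free and $w\in\mathrm{trop}(I)$.

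Geometrically this is the statement that each tropical condition constrains a separate block of coordinates, so a point satisfying all of them lifts block by block to a point of $V(I)=\bigcap_i V(\ell_i)$: lift each $w|_{S_i}$ through the hyperplane $V(\ell_i)$ by Kapranov's theorem and choose the coordinates outside every $S_i$ freely, the disjointness guaranteeing that these partial lifts assemble without conflict. I would keep this lifting picture in reserve as an alternative argument (which, after a Puiseux base change, also covers the trivial-valuation case directly). The remaining and, to my mind, most delicate point is passing from $H_0$ to the compactification $\mathbb{TP}^{n}$: one must verify the statement on the boundary faces, where some coordinates sit at infinity and the definition of $\mathrm{tropproj}$ splits off monomial factors — precisely what occurs for a generator $\ell_i=x_j$, whose $\mathrm{tropproj}$ is the facet $F_j$. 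I would treat a point $w$ in the relative interior of such a face by setting the infinite coordinates to zero, which replaces $I$ by its image in the quotient ring and replaces the $\ell_i$ by linear forms whose supports are again mutually disjoint, and then apply the torus argument to this smaller instance. Organising this facet bookkeeping uniformly, and reconciling it with the monomial-factor convention for $\mathrm{tropproj}$, is the main obstacle; the algebraic heart of the lemma is entirely the Gr\"obner-basis/disjoint-support computation above.
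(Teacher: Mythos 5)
Your proposal is correct, but it takes a genuinely different route from the paper's. The paper's proof is essentially a two-step reduction: since a linear ideal has a tropical basis consisting of linear forms \cite[Proposition 4.1.6]{MacStu15}, it suffices to show that $\cap_{i=1}^r {\rm tropproj}(\ell_i) \subseteq {\rm tropproj}(\ell)$ for every linear form $\ell$ in $I$; writing $\ell$ as a combination $\sum_{i \in S} c_i \ell_i$, the disjointness of supports rules out any cancellation, so the support of $\ell$ is the disjoint union of the supports of the $\ell_i$ with $i \in S$, and the tropical condition for $\ell$ follows at once from those for the $\ell_i$ --- with the monomial generators contributing the facets $F_j$, so that torus points and boundary points of $\mathbb{TP}^{n}$ are handled in the same one-line support computation. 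You instead argue from scratch: Buchberger's coprimality criterion makes $\{\ell_1,\dots,\ell_r\}$ a universal Gr\"obner basis, so ${\rm in}_w(I)=\langle {\rm in}_w(\ell_1),\dots,{\rm in}_w(\ell_r)\rangle$ is monomial-free whenever each ${\rm in}_w(\ell_i)$ is, settling the torus, and you then recurse over the boundary faces of $\mathbb{TP}^n$ by passing to quotient rings. What each approach buys: yours is self-contained (it does not need the circuit result \cite[Proposition 4.1.6]{MacStu15}) and treats all $f \in I$ simultaneously via initial ideals, whereas the paper's citation collapses the problem to linear forms, where the no-cancellation argument absorbs exactly the facet bookkeeping that you correctly single out as your main obstacle --- the paper's own treatment of that bookkeeping is the single asserted inclusion $\cap_{i \in S}{\rm tropproj}(\ell_i) \subseteq {\rm tropproj}(\ell) \cap \bigl(\cap_{m \in S_m} {\rm tropproj}(m)\bigr)$, no heavier than your sketch. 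Two small points to tighten in your version: since the lemma is used in the paper with both trivial and non-trivial valuations, you should either cite Gr\"obner theory over valued fields to justify the coprimality criterion in that setting, or promote your Kapranov block-by-block lifting from a reserve argument to the official one (with disjoint supports it is arguably the cleanest); and in the face recursion you should record the convention that a generator mapping to zero in the quotient imposes no condition on that face.
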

\begin{proof}
Recall that a circuit of $I$ is a linear form whose support is inclusion-minimal. By  \cite[Lemma 4.3.16]{MacStu15},  the circuits of $I$ form a tropical basis for it. 
The main idea behind the proof is to apply this lemma to every torus orbit of $\mathbb{P}^n$.  


For any torus orbit of  $\mathbb{P}^n$, let $\tilde{\ell}_j$ and $\tilde{I}$ be the restrictions of $\ell_j$ and $I$, respectively to this torus orbit.  Since the supports of $\ell_1,\dots,\ell_r$ are mutually disjoint, they are precisely the set of circuits of $I$.  More generally,  the set of non-zero elements of $\{\tilde{\ell}_1,\dots,\tilde{\ell}_r \}$ are precisely the set of circuits of $\tilde{I}$ and by \cite[Lemma 4.3.16]{MacStu15}, we know that this set is a tropical basis for $\tilde{I}$. Hence, $\{\ell_1,\dots,\ell_r\}$ is an extended tropical basis for $I$.

\end{proof}

\subsection{Remarks on Planar Embeddings}\label{remplaemb_subsect}
In the following, we make precise the sense in which we use the term ``planar embedding'' throughout the article and record facts about them that we use frequently.  Before this,  we note that each edge of $G$ corresponds to an open interval in $G^{\rm top}$.

\begin{definition}\label{planemb_def}
A planar embedding of a simple graph $G$ is a continuous, injective function $\tau:G^{\rm top} \rightarrow \mathbb{R}^2$ such that the following properties are satisfied:
\begin{enumerate}
\item The function $\tau$ takes each edge $e$ of  $G^{\rm top}$ to an open interval. 
\item\label{conv:prop} The set $\mathbb{R}^2 \setminus ({\rm Im}(\tau))$, where ${\rm Im}(\tau)$ is the image of $\tau$, consists of finitely many connected components. All connected components, except precisely one, are bounded.  The (Euclidean) closure of each bounded connected component is a convex polygon. The unbounded component is the complement of a convex polygon. 
\end{enumerate}
\end{definition}


Recall from the introduction that the Euclidean closures of the bounded components are called the interior faces of the planar embedding and the Euclidean closure of the unbounded component is called the exterior face of the planar embedding. By Steinitz' theorem (\cite[Chapter 4]{Zie07}), every three-vertex-connected planar graph $G$ is a one-skeleton of a three-dimensional polytope and an embedding of $G$ can be obtained via a stereographic projection of this polytope into $\mathbb{R}^2$. We also refer to the related notion of convex embeddings of planar graphs \cite[Chapter 4]{LovVet02}.  A vertex of $G$ that is not incident on the exterior face is called an \emph{interior vertex} and otherwise, the vertex is called an \emph{exterior vertex}.    In the following, we record properties of $\tau$ that will be useful in the forthcoming sections.
\begin{proposition}\label{grapro_prop}
Let $G$ be a planar graph. The following properties hold:
 \begin{enumerate}
\item\label{th:two} Three distinct vertices that are pairwise adjacent do not share two distinct interior faces.  Three distinct vertices can share at most one face (interior or exterior).  

\item \label{th:six}   If  a pair of distinct vertices are not both exterior vertices and are adjacent, then they share precisely two interior faces. If both are exterior vertices and are adjacent, then they share precisely one interior face of $G$. If a pair of distinct vertices  are not adjacent, then they share at most one interior face of $G$. Furthermore, if both these vertices are exterior vertices, then they do not share an interior face. Every edge is shared by precisely two faces (one of which might be the exterior face).


\item \label{th:four} If $G$ is a three-regular, three-connected graph, then every interior face can share at most one edge with the exterior face. 

\item \label{th:five} If $G$ is three-regular, then every exterior vertex has a unique interior vertex adjacent to it.

\end{enumerate}
\end{proposition}

The proof of Proposition \ref{grapro_prop} mainly uses the convexity property of the faces (Item \ref{conv:prop}, Definition \ref{planemb_def}). 



\section{The Sch\"on Embedding of $X_G$}

We begin by recalling the sch\"on embedding of a graph curve $X_G$ where $G$ is a  three-regular, three-connected planar graph.   Let $G$ be a three-regular, three-connected planar graph.   
 We label the interior faces of the planar embedding by variables: let $x_F$ be the variable corresponding to the face $F$. Let $R$ be the graded polynomial ring with coefficients in $\kappa$ (the ground field) and variables $x_F$ where $F$ ranges over all the interior faces of the planar embedding of $G$. We identify $\mathbb{P}^{g-1}$ with ${\rm Proj}(R)$.  
 
 An interior vertex is a vertex that is not incident on the exterior face and otherwise, the vertex is called an exterior vertex.   To each vertex $v$ of $G$, we associate an ideal $L_v$ defined by a collection of linear forms as follows: 

\begin{enumerate}

\item If $v$ is an interior vertex, then $L_v$ is the ideal generated by $x_F$ over all interior faces $F$ not incident on $v$ and the sum of the variables corresponding to the three interior faces incident on $v$.

\item If $v$ is an exterior vertex, then $L_v$ is the ideal generated by $x_F$ over all interior faces $F$ not incident on $v$. 

\end{enumerate}

In both the cases above, $L_v$ defines a line in $\mathbb{P}^{g-1}$.  We refer to the algebraic curve in $\mathbb{P}^{g-1}$ corresponding to the line arrangement defined by $L_v$ as $v$ varies over all the vertices of $G$ as the \emph{sch\"on embedding} of the graph curve $X_G$.  As we shall see in Proposition \ref{schocan_prop}, this is a canonical embedding of $X_G$.

\begin{proposition}\label{dualgranondeg_prop} The dual graph of the sch\"on embedding of $X_G$ is $G$. Furthermore, the sch\"on embedding is non-degenerate, i.e. it is not contained in any hyperplane of $\mathbb{P}^{g-1}$.  \end{proposition}

\begin{proof}
The first part of the proposition follows by noting that if vertices $u \neq v$ are adjacent, then, by Item \ref{th:six}, Proposition \ref{grapro_prop}, they share precisely two distinct faces $F_i$ and $F_j$ (one of which is the exterior face precisely when both $u$ and $v$ are exterior vertices). Furthermore, if at least one of $u$ or $v$ is an interior vertex, then $L_u+L_v=\langle x_{F_i}+x_{F_j},~x_F|$~{\rm $F$ is not incident on either $u$ or $v$} $\rangle$  and otherwise, $L_u+L_v=\langle x_F|$~{\rm $F$ is not incident on either $u$ or $v$}$\rangle$. In both cases, $L_u+L_v$ defines a point in $\mathbb{P}^{g-1}$. Conversely, if vertices $u \neq v$ are not adjacent, then, by Item \ref{th:six}, Proposition \ref{grapro_prop},  $L_u+L_v$ is the irrelevant ideal. Hence, the dual graph of the sch\"on embedding is $G$.

For the second part, suppose for the sake of contradiction that a hyperplane defined by the linear form $\sum_{F}a_Fx_F$ contains the sch\"on embedding. Hence, $\sum_{F}a_Fx_F \in L_v$ for all vertices $v$.  For each interior vertex $v$, the condition that $\sum_{F}a_Fx_F \in L_v$ implies that the coefficients $a_{F_i}=a_{F_j}=a_{F_k}$ for the three interior faces $F_i,F_j$ and $F_k$ that are incident on $v$.    By Steinitz' theorem, $G$ is the one-skeleton of a three-dimensional polytope $P$. Note that the polar polytope $P^{\vee}$ of $P$ is also a three-dimensional polytope. Its vertices are in bijection with the faces of $G$ (including the exterior one).  By Steinitz' theorem, the one-skeleton of $P^{\vee}$ is three-vertex-connected. Hence, the graph obtained from the one-skeleton of $P^{\vee}$ by deleting the vertex corresponding to the exterior face of $G$ is connected and this implies that all the coefficients  $a_F$ are equal. Hence, the hyperplane must be defined by $\sum_{F}x_F$. However, this hyperplane does not contain the line corresponding to $L_v$ for any exterior vertex $v$.

\end{proof}

 In the following, we describe the sch\"on embedding in terms of certain Stanley-Reisner ideals. Recall that the Stanley-Reisner ideal of a simplicial complex $\Delta$ with vertex set $\{1,\dots,n\}$ is a monomial ideal $I_{\Delta}$ in $\kappa[x_1,\dots,x_n]$, \cite{FraMerSch14}.  It is generated by products of variables $\prod_{i \in \bar{F}} x_i$ where $\bar{F} \subseteq \{1,\dots,n\}$ is a non-face of $\Delta$.  We refer to its associated projective variety as the Stanley-Reisner variety of $\Delta$.

Recall that for a polytope $Q$, its dual simplicial complex is the simplicial complex whose vertex set is the set of facets of $Q$ and the simplices are the subsets consisting of facets of $Q$ whose intersection is non-empty.  Let $M$ be the dual simplicial complex of the three-dimensional polytope $P$ associated to $G$. Let $\mathcal{F}$ be the set of faces (both interior and exterior) of the planar embedding of $G$.  Note that the vertices of $M$ are in bijection with the facets of $P$ and the facets of $P$ are in turn in bijection with the elements of $\mathcal{F}$. Hence, we consider the graded polynomial ring $\tilde{R}$ with coefficients in $\kappa$,  with variables $x_F$ where $F$ varies over $\mathcal{F}$.  We identify $\mathbb{P}^{g}$ with ${\rm Proj}(\tilde{R})$.  Since $M$ is a two-dimensional simplicial complex,  the quotient ring $\tilde{R}/I_{M}$ of its Stanley-Reisner ideal has Krull dimension three \cite[Corollary 1.15]{MilStu05}, \cite[Theorem 6.15]{FraMerSch14}. Hence, it defines a surface $S$ in $\mathbb{P}^g$ referred to as the \emph{Stanley-Reisner surface of $M$}. Furthermore, we identify  ${\rm Proj}(R)$ with the hyperplane $\sum_{F \in \mathcal{F}} x_F$ of $\mathbb{P}^g$.


 In the following proposition, we show that the sch\"on embedding is the intersection of the Stanley-Reisner surface of $M$ with the hyperplane $\sum_{F \in \mathcal{F}} x_F$. 
 For a homogenous ideal $I$, let $V(I)$ be the projective variety defined by it. 

\begin{proposition}\rm{(\bf Sch\"on Embedding in terms of the Stanley-Reisner Surface)}\label{scho-stanries_theo}
 Let $I_{\rm sch}$ be the ideal  of $R$ generated by polynomials obtained by replacing the variable $x_E$, corresponding to the exterior face, in each monomial minimal generator of $I_M$ by $-\sum_{F \neq E} x_F$.  The projective variety $V(I_{\rm sch})$  is the sch\"on embedding of $X_G$. 
\end{proposition}

\begin{proof} 
The Stanley-Reisner ideal $I_M$ is a radical ideal and hence, its primary decomposition is the intersection of its associated primes.  By \cite[Proposition 4.11]{FraMerSch14},  it has the primary decomposition $\cap_{v \in V(G)}\mathcal{P}_v$ where $V(G)$ is the set of vertices of $G$ and $\mathcal{P}_v$ is the ideal generated by the variables $x_F$ where $F\in \mathcal{F}$ is not incident on $v$.  Hence, the associated Stanley-Reisner surface of $M$ is an arrangement of $2g-2$ two-dimensional planes in $\mathbb{P}^g$. Note that, for each $v$, the intersection of $V(\mathcal{P}_v)$ with  the hyperplane $\sum_{F \in \mathcal{F}} x_F$ is precisely $L_v$, via the identification of ${\rm Proj}(R)$ with  the hyperplane $\sum_{F \in \mathcal{F}} x_F$. Hence, $V(I_M+\langle \sum_{F \in \mathcal{F}} x_F \rangle)=V(I_{\rm sch})$ is the sch\"on embedding of $X_G$. 
\end{proof}

\begin{remark}
\rm To the best of our knowledge, it is not known whether $I_{\rm sch}$ is a radical ideal, i.e. if $I_{\rm sch}=\cap_{v \in V(G)}L_v$ or not. \qed
\end{remark}
Note that by \cite[Corollary 2.2]{BayEis91}, the canonical bundle of $X_G$ is very ample.  Next, we deduce using Proposition \ref{scho-stanries_theo} that the sch\"on embedding of $X_G$ is a canonical embedding.

\begin{proposition}\label{schocan_prop} The sch\"on embedding is a canonical embedding of $X_G$, i.e. an embedding by the complete linear series associated to the canonical bundle of $X_G$.   \end{proposition}

\begin{proof}
By Proposition \ref{dualgranondeg_prop}, the dual graph of the sch\"on embedding is isomorphic to $G$. The rest of the proof is based on the proof of \cite[Corollary 6.2]{BayEis91} where an analogous statement for a general hyperplane section of $S$ is shown. Since the boundary of the polar polytope of $P$ is a geometric realisation of $M$, the simplicial complex $M$ is homeomorphic to a $2$-sphere.  Hence, by \cite[Theorem 6.1]{BayEis91} the Stanley-Reisner surface of $M$ has a trivial canonical bundle. By the adjunction formula \cite[Proposition 30.4.8]{Vak}, we know that $\omega_{G} \cong (\omega_S \otimes_{\mathcal{O}_S} \mathcal{O}_S(X_G))|_{X_G}= (\omega_S \otimes_{\mathcal{O}_S} \mathcal{O}_S(1))|_{X_G}$ where $\omega_G$ and $\omega_S$ are the canonical bundles of $X_G$ and $S$ respectively.  Hence, we conclude that $\omega_G \cong \mathcal{O}_S(1)|_{X_G}$ and hence, the sch\"on embedding is an embedding by a linear series of $\omega_G$. Finally, we note that by Proposition \ref{dualgranondeg_prop}, the sch\"on embedding is non-degenerate and that $h^0(X_G,\omega_G)=g$ \cite[Proposition 1.1]{BayEis91} to conclude that the sch\"on embedding is an embedding by the complete linear series of $\omega_G$. 
\end{proof}
As corollary, we obtain the following. 
\begin{corollary} The sch\"on embedding of $X_G$ is independent of the choice of planar embedding of $G$. \end{corollary}

The following proposition determines an extended tropical basis for the sch\"on embedding and will not be used subsequently.  We include it for possible future applications.

\begin{proposition}\rm{({\bf Tropical Basis of the Sch\"on Embedding})}\label{tropbas_theo}
 The minimal generating set $\mathcal{G}$ of $I_{\rm sch}$ that is presented in Proposition \ref{scho-stanries_theo} is an extended tropical basis.
\end{proposition}
\begin{proof}
Let $\mathcal{G}=\{g_1,\dots,g_r\}$.  Suppose for the sake of contradiction  that $\mathcal{G}$ is not an extended tropical basis, then there is a point $p \in \cap_{j=1}^{r} {\rm tropproj}(g_j)$ that is not contained in the extended tropicalisation of $I_{\rm sch}$. Since the elements in $\mathcal{G}$ are all products of linear forms, this implies that there is a choice of linear forms $\ell_1,\dots,\ell_r$ such that $\ell_j | g_j$ for each $j$ from one to $r$ and such that  $p \in \cap_{j=1}^{r} {\rm tropproj}(\ell_j)$. 

Consider the ideal generated by the linear forms $\ell_1,\dots,\ell_r$. This ideal contains $I_{\rm sch}$.  Since it is generated by  linear forms, its zero locus, being non-empty, is either a point or an irreducible component of the sch\"on embedding.  Furthermore, any $\ell_j$ is either a variable or of the form $\sum_{F \in \mathcal{F}, F \neq E}x_F$ where $E$ is the exterior face.  We claim that $\ell_1,\dots,\ell_r$ is an extended tropical basis.  If $\ell_1,\dots,\ell_r$ are all variables, then this is immediate (also, see Lemma \ref{tropbas_lem}).  Otherwise, we may assume that $\ell_r=\sum_{F \in \mathcal{F}, F \neq E}x_F$ and $\ell_1,\dots,\ell_{r-1}$ are all variables. Let $\ell'_r=\sum_{F \in \mathcal{F}, F \neq E,x_F \notin \{\ell_1,\dots,\ell_{r-1}\}}x_F$. By the definition of extended tropicalisation, we have $\cap_{j=1}^{r} {\rm tropproj}(\ell_j)=  \cap_{j=1}^{r-1} {\rm tropproj}(\ell_j) \cap {\rm tropproj}(\ell'_r)$.  By Lemma \ref{tropbas_lem}, the set $\{\ell_1,\dots,\ell_{r-1},\ell'_r\}$ is an extended tropical basis for $\langle \ell_1,\dots,\ell_r \rangle$ and hence, so is $\{\ell_1,\dots,\ell_r\}$. This implies that $p$ is contained in the extended tropicalisation of $\langle \ell_1,\dots,\ell_r \rangle$ and hence, in the extended tropicalisation of the sch\"on embedding.  This is a contradiction.

 
\end{proof}

\begin{example}\rm
Let $\mathcal{C}$ be the one-skeleton of the three-dimensional cube, as shown in Figure \ref{cube}. The minimal generating set of the sch\"on embedding of $X_{\mathcal{C}}$ described in Proposition \ref{scho-stanries_theo} is the following. 

{\large
\begin{center}
$\langle x_{F_2}x_{F_4}, x_{F_3}x_{F_5}, x_{F_1}^2+x_{F_1}x_{F_2}+x_{F_1}x_{F_3}+x_{F_1}x_{F_4} +x_{F_1}x_{F_5}\rangle.$
\end{center}}

Hence, the sch\"on embedding is a complete intersection cut-out by three (degenerate) quadrics in $\mathbb{P}^4$  and according to Proposition \ref{tropbas_theo}, these quadrics also form an extended tropical basis. However, canonical embeddings of graph curves are not, in general, complete intersections.

\qed
\end{example}

\section{Tropicalisation of the Sch\"on Embedding}\label{tropschoen_sect}

In the following, we study the extended tropicalisation of the sch\"on embedding of $X_G$ when $G$ is a three-regular, three-connected planar graph. Recall that this extended tropicalisation is contained in $\mathbb{TP}^{g-1}$ and that $\mathbb{TP}^{g-1}$ is homeomorphic to the $(g-1)$-simplex. As in the previous section, we identify $\mathbb{P}^{g-1}$ with ${\rm Proj}(R)$ where $R=\kappa[x_F|~$ $F$ {\rm ranges over the interior faces of the planar embedding of }$G]$ equipped with the standard grading.  Note that each facet of $\mathbb{TP}^{g-1}$  corresponds to a coordinate hyperplane $x_F$ where $F$ is an interior face of the planar embedding of $G$.  We label each facet of $\mathbb{TP}^{g-1}$ with the corresponding interior face $F$. More generally, we label each $i$-dimensional face of $\mathbb{TP}^{g-1}$ for $i \in [0,\dots,g-2]$ with the union of the labels of each facet containing it. Note that for faces $\mathfrak{f}$ and $\mathfrak{g}$ of $\mathbb{TP}^{g-1}$,  we have $\mathfrak{f} \subseteq \mathfrak{g}$  if and only if the label of $\mathfrak{f}$ is contained in the label of $\mathfrak{g}$.  For an algebraically closed field $\mathbb{K}$ with a non-trivial valuation, the homogenous coordinates on $\mathbb{P}_{\mathbb{K}}^{g-1}$, via the extended tropicalisation map, induce coordinates on $\mathbb{TP}^{g-1}$.  We refer to \cite[Section 6.2]{MacStu15} for more details.  In the following, we specify points in $\mathbb{TP}^{g-1}$ by corresponding points in $\mathbb{P}_{\mathbb{K}}^{g-1}$. 



{\bf Tropicalisation of the Irreducible Components:} Recall that each irreducible component of the sch\"on embedding is a line defined by the ideal $L_v$.  If $v$ is an interior vertex incident on faces $F_i,~F_j$ and $F_k$  then $L_v=\langle x_{F_i}+x_{F_j}+x_{F_k},x_F|~F$ is an interior face of $G$ not incident on $v \rangle$. The extended tropicalisation ${\rm tropproj}(L_v)$ of $L_v$ is contained in the two-dimensional face $D_v$ of $\mathbb{TP}^{g-1}$ labelled by $\{F|~F$ is an interior face of $G$ not incident on $v\}$. Furthermore,  ${\rm tropproj}(L_v)$ contains precisely one branch point (of valence three). The three branches are labelled by $\{F_i,F_j\},~\{F_i,F_k\}$ and $\{F_j,F_k\}$ according to the sum of the pairs of the corresponding variables that is the initial form along that branch \cite[Definition 3.1.1, Theorem 3.1.3]{MacStu15}.  Each such pair of faces shares a unique edge and hence, a branch is also labelled by that edge $e$, say. The  branch with the label $\{F_s,F_t\}$ intersects the edge $\xi$ of $\mathbb{TP}^{g-1}$ with the label $\{F|~F$ is an interior face that is neither $F_s$ nor $F_t$$\}$ at precisely one point. This point of intersection $\zeta_e$, also denoted by $m_{\xi}$, is the tropicalisation of the point in $\mathbb{P}_{\mathbb{K}}^{g-1}$ with coordinates $(p_F)_F$ such that  $p_{F_s},~p_{F_t}$ are both not zero, ${\rm val}(p_{F_s})={\rm val}(p_{F_t})$ and $p_F=0$ for all interior faces $F$ that is neither $F_s$ nor $F_t$. Hence, the point of intersection lies in the relative interior of this edge $\xi$.  We refer to Figure \ref{troplinesnew} for an illustration of ${\rm tropproj}(L_v)$.

\begin{figure}
  \centering
  \includegraphics[width=8cm]{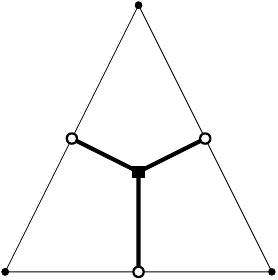}
  \caption{The extended tropicalisation of $L_v$ where $v$ is an interior vertex is shown in thick lines and the two-dimensional $D_v$ is shown in thin lines. The branch point is depicted by the square dot and the intersection points of ${\rm tropproj}(L_v)$ with the boundary of $D_v$ are depicted by the hollow circular dots.}\label{troplinesnew}
\end{figure}

Next, we turn to ${\rm tropproj}(L_v)$ where $v$ is an exterior vertex. In this case, $L_v=\langle x_F|~F$ is an interior face of $G$ not incident on $v\rangle$ and hence, ${\rm tropproj}(L_v)$ is equal to the edge $\xi_v$ of $\mathbb{TP}^{g-1}$ with the label $\{F|~F$ is an interior face of $G$ not incident on $v\}$.

In the following lemma, we determine the points of intersection between the tropical lines ${\rm tropproj}(L_u)$ and ${\rm tropproj}(L_v)$ where $u,~v$ are distinct vertices of $G$. 

\begin{lemma}\label{troplinesinter_prop}
Let $u$ and $v$ be distinct vertices of $G$. The intersection ${\rm tropproj}(L_u) \cap {\rm tropproj}(L_v) \neq \emptyset$ if and only if $u$ and $v$ are adjacent in $G$. If ${\rm tropproj}(L_u) \cap {\rm tropproj}(L_v) \neq \emptyset$, then it is a point and 
this intersection point is as follows: 
\begin{itemize}
\item If $u$ and $v$ are not both exterior vertices, then ${\rm tropproj}(L_u) \cap {\rm tropproj}(L_v)$  is contained in the relative interior of the edge of $\mathbb{TP}^{g-1}$ that is labelled by $\{F|~F$ is an interior face that does not contain both $u$ and $v\}$.
\item If both $u$ and $v$ are exterior vertices, then  ${\rm tropproj}(L_u) \cap {\rm tropproj}(L_v)$ is the vertex of $\mathbb{TP}^{g-1}$  that is labelled by $\{F|~F$ is an interior face that does not contain both $u$ and $v\}$. 
\end{itemize}
\end{lemma}

\begin{proof}

In the following, we invoke Item \ref{th:six}, Proposition \ref{grapro_prop}. 
 Suppose that $u$ and $v$ are interior vertices and are adjacent. Let $e=(u,v)$. In this case, the  two-dimensional faces $D_u$ and $D_v$ of $\mathbb{TP}^{g-1}$, that contain ${\rm tropproj}(L_u)$ and ${\rm tropproj}(L_v)$ respectively,  share an edge $\mu_{u,v}$ that is labelled by the set of interior faces that do not contain both $u$ and $v$. Both ${\rm tropproj}(L_u)$ and ${\rm tropproj}(L_v)$ intersect  $\mu_{u,v}$ at its relative interior, more precisely at the point $\zeta_e$ (recall from the text preceding this lemma).  This is their only point of intersection. We refer to A, Figure \ref{troplinesinter} for a depiction. If one of the two vertices, $u$ say is an interior vertex and the vertex $v$ is an exterior vertex, then recall that ${\rm tropproj}(L_u)$ is contained in $D_u$ and ${\rm tropproj}(L_v)$ is equal to the edge $\xi_v$ of $\mathbb{TP}^{g-1}$.  If $u$ and $v$ are adjacent and share an edge $e$, then the label of $D_u$ is contained in the label of $\xi_v$. Hence, $\xi_v$ is an edge of $D_u$ and ${\rm tropproj}(L_u)$ intersects ${\rm tropproj}(L_v)$ precisely at the point $\zeta_e$.  If both $u$ and $v$ are exterior vertices, then ${\rm tropproj}(L_u)$ and ${\rm tropproj}(L_v)$ are the edges $\xi_u$ and $\xi_v$, respectively. We refer to B, Figure \ref{troplinesinter} for an illustration.  If $u$ and $v$ are adjacent, then they share precisely one interior face $F_{u,v}$. The edges $\xi_u$ and $\xi_v$ share a vertex whose label is the set of all interior faces apart from $F_{u,v}$. This is the only intersection point of ${\rm tropproj}(L_u)$ and ${\rm tropproj}(L_v)$. We refer to C, Figure \ref{troplinesinter} for an illustration.

If $u$ and $v$ are not adjacent, then, by Item \ref{th:six}, Proposition \ref{grapro_prop}, they share at most one interior face. If $u$ and $v$ are both interior vertices, then, based on whether $u$ and $v$ share an interior face or not, the two-dimensional faces $D_u$ and $D_v$ either share precisely one vertex  or are disjoint. Since ${\rm tropproj}(L_u)$ intersects each edge of $D_u$ in the relative interior of that edge, we conclude that ${\rm tropproj}(L_u)$ and ${\rm tropproj}(L_v)$  are disjoint. If $u$ is an interior vertex and $v$ is an exterior vertex, then $\xi_v$ is not an edge of $D_u$, more precisely based on whether $u$ and $v$ share an interior face or not, $D_u$ and $\xi_v$ intersect at a vertex or are disjoint.   As in this previous case, we conclude that ${\rm tropproj}(L_u)$ and ${\rm tropproj}(L_v)$  are disjoint. If $u$ and $v$ are both exterior vertices, then they do not share an interior face of $G$. Hence, $\xi_u={\rm tropproj}(L_u)$ and $\xi_v={\rm tropproj}(L_v)$ are disjoint.
\end{proof}

\begin{figure}
  \centering
  \includegraphics[width=14cm]{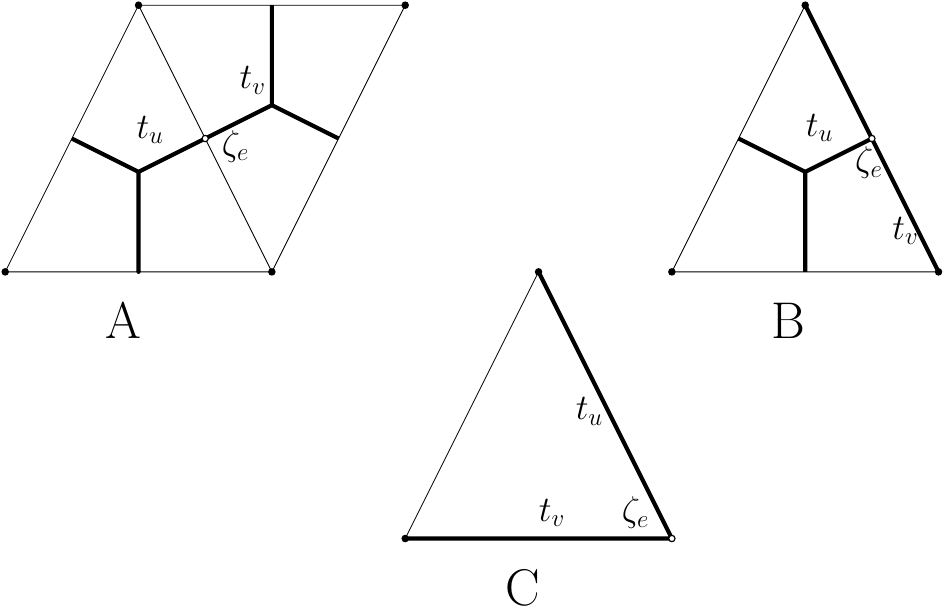}
  \caption{{\bf Intersection of $t_u={\rm tropproj}(L_u)$ and $t_v={\rm tropproj}(L_v)$:} Figures A,~B and C illustrate the cases where (A) $u$ and $v$ are both interior vertices, (B) $u$ is an interior vertex and $v$ is an exterior vertex and  (C) $u$ and $v$ are both exterior vertices, respectively.}\label{troplinesinter}
\end{figure}

Next, we use Lemma \ref{troplinesinter_prop} to classify the trivalent points  of the extended tropicalisation $\mathcal{T}$ of the sch\"on embedding of $X_G$. Before this, we note that in any planar embedding of a three-regular graph, every exterior vertex has precisely one interior vertex adjacent to it.   The following lemma will play a key role in constructing a homeomorphism between $G^{\rm top}$ and the extended tropicalisation of the sch\"on embedding of $X_G$.

\begin{lemma}\label{trivalchar_lem}
The points of $\mathcal{T}$ are either bivalent or trivalent. The trivalent points of $\mathcal{T}$ are of the following two distinct types:
\begin{itemize}
\item A branch point of ${\rm tropproj}(L_u)$ where $u$ is an interior vertex. 

\item An intersection point of ${\rm tropproj}(L_u)$ and ${\rm tropproj}(L_v)$ where $u$ is an exterior vertex and $v$ is the unique interior vertex adjacent to $u$. 
\end{itemize}
\end{lemma}

\begin{proof}
We start by noting that points in each tropical line are either bivalent or trivalent as points in that line. If $u$ is an interior vertex, then the tropical line ${\rm tropproj}(L_u)$ contains a branch point and this is its only trivalent point. Otherwise, ${\rm tropproj}(L_u)$ does not contain a trivalent point.  Furthermore, by Lemma \ref{troplinesinter_prop}, ${\rm tropproj}(L_u)$ (when $u$ is an interior vertex) does not intersect any other tropical line at its branch point. Hence, each such branch point remains a trivalent point as a point in $\mathcal{T}=\cup_u {\rm tropproj}(L_u)$. Any other trivalent point of $\mathcal{T}$ must be an intersection point of two distinct tropical lines. By Lemma \ref{troplinesinter},  the intersection point $\zeta_e$, where $e=(u,v)$, of ${\rm tropproj}(L_u)$ and ${\rm tropproj}(L_v)$ where $u$ is an interior vertex and $v$ is an exterior vertex is a trivalent point of ${\rm tropproj}(L_u) \cup {\rm tropproj}(L_v)$, see B, Figure \ref{troplinesinter}. In the following, we show that $\zeta_e$ is not contained in ${\rm tropproj}(L_w)$ for $w \notin \{u,v\}$. 
Suppose the contrary,  by Lemma \ref{troplinesinter_prop}, we deduce that $w$ is adjacent to both $u$ and $v$, and $w$ is contained in the two interior faces that are shared by $u$ and $v$.  By Item \ref{th:two}, Proposition \ref{grapro_prop},  this is a contradiction.  Hence, $\zeta_e$ is a trivalent point of $\mathcal{T}$. 


Next, we show that any other point in $\mathcal{T}$ that is an intersection point of tropical lines is bivalent. Consider the intersection point $\zeta_e$ of ${\rm tropproj}(L_u)$ and ${\rm tropproj}(L_v)$ where $u$ and $v$ are both interior vertices, see A, Figure \ref{troplinesinter}. The point $\zeta_e$ is a bivalent point of ${\rm tropproj}(L_u) \cup {\rm tropproj}(L_v)$. Suppose that, for the sake of contradiction, $\zeta_e$ is a point of higher valence in $\mathcal{T}$. This implies that there is a vertex $w$ apart from $u$ and $v$ such that ${\rm tropproj}(L_w)$ contains $\zeta_e$. By Lemma \ref{troplinesinter_prop},  we deduce that the vertices $u,~v$ and $w$ share two distinct interior faces and this is a contradiction, Item \ref{th:two}, Proposition \ref{grapro_prop}. Consider the case where $u$ and $v$ are both exterior vertices that are adjacent. We note that the intersection point $\zeta_e$ of ${\rm tropproj}(L_u)$ and ${\rm tropproj}(L_v)$, as shown in C, Figure   \ref{troplinesinter}, is a bivalent point of ${\rm tropproj}(L_u) \cup {\rm tropproj}(L_v)$. We show that it cannot be contained in any other tropical line ${\rm tropproj}(L_w)$.  Suppose the contrary, by Lemma \ref{troplinesinter_prop}, $w$ is adjacent to both $u$ and $v$, and must be an exterior vertex (since $\zeta_e$ is a vertex of $\mathbb{TP}^{g-1}$ and ${\rm tropproj}(L_q)$ contains a vertex  of $\mathbb{TP}^{g-1}$ precisely when $q$ is an exterior vertex). Furthermore, Lemma \ref{troplinesinter_prop} also implies that $u,~v$ and $w$ share an interior face. Since the vertices $u,~v$ and $w$ must be in general position, their convex hull is a triangle $\delta$  and this triangle must be their common interior face.  The graph $G$ is three-regular. Hence, $u$ has another vertex $u_n$, say apart from $v$ and $w$ that is adjacent to it. This vertex $u_n$ must also be incident on the exterior face (since $u_n$ shares two faces with $u$ and is not incident on the triangle $\delta$)  implying that $u$ has three distinct exterior vertices adjacent to it. Since $u$ is an exterior vertex, this is a contradiction. 
\end{proof}


\subsection{Proof of Theorem \ref{tropcan_theo}}\label{homeoproof_subsect}

 We construct a homeomorphism $\phi$ between $G^{\rm top}$ and the extended tropicalisation $\mathcal{T}$ of the sch\"on embedding of $X_G$ when $G$ is a three-regular, three-connected planar graph. 
Our strategy is to first construct a bijection between the set of  trivalent points of $G^{\rm top}$, i.e. the set of vertices $V(G)$ of $G$, and the set of trivalent points of $\mathcal{T}$. For this, we use the description of the trivalent points of $\mathcal{T}$ provided  by Lemma \ref{trivalchar_lem}. We define $\phi|V(G)$ as follows. For an interior vertex $u$, we denote the branch point of ${\rm tropproj}(L_u)$ by $b_u$. For an exterior vertex $u$, let $\iota(u)$ denote the unique interior vertex adjacent to it. Recall that for an edge $e=(u,v)$ of $G$, we denote by $\zeta_e$ the (unique) intersection point of ${\rm tropproj}(L_u)$ and ${\rm tropproj}(L_v)$. 
For a vertex $u$ of $G$,

\begin{equation}
\phi(u)=
\begin{cases}
b_u,\text{ if }$u$ \text{ is an interior vertex},\\
\zeta_{(u,\iota(u))}, \text{ if $u$ is an exterior vertex}.
\end{cases}
\end{equation}
By Lemma \ref{trivalchar_lem}, $\phi|V(G)$ is a bijection between $V(G)$ and the set of trivalent points of $\mathcal{T}$. We extend $\phi$ to $G^{\rm top}$ via the following observations. Note that, by definition, $G^{\rm top} \setminus V(G)$ is a disjoint union of open intervals that is in bijection with the edges of $G$.  Consider the set $\mathcal{B}$ of bivalent points of $\mathcal{T}$. By the Bieri-Groves theorem \cite{BieGro84}, \cite[Theorem 3.3.5]{MacStu15}, $\mathcal{B}$ is also a disjoint union of finitely many open intervals. 

Lemma \ref{trivalchar_lem} yields the following description of these open intervals. Recall, from the paragraph ``Tropicalisation of Irreducible Components", that each branch of ${\rm tropproj}(L_u)$, where $u$ is an interior vertex, is labelled by an edge $e$ that is incident on $u$. Consider the set of bivalent points of $\mathcal{T}$ that are contained in a branch of ${\rm tropproj}(L_u)$ where $u$ is an interior vertex.  We denote this set by $\chi_{u,e}$. Consider an exterior vertex $u$, the set ${\rm tropproj}(L_u) \setminus \{\zeta_{(u,\iota(u))}\}$ consists of two connected components. Each connected component is a half-open interval and based on the intersection point that it contains, it corresponds to an edge $e$ of the form $(u,v)$ where $v$ is an exterior vertex. We denote this component by $\chi_{u,e}$. There are three types of open intervals, they are as follows.

\begin{enumerate} 

\item If $e=(u,v)$ where $u$ and $v$ are both interior vertices, then the set $\chi_{u,e} \cup \chi_{v,e}$ is an open interval.  
 
\item If $e=(u,v)$ such that $u$ is an interior vertex and $v$ is an exterior vertex, then $\chi_{u,e}$ is an open interval.

\item If $e=(u,v)$ where $u$ and $v$ are both exterior vertices, then the set $\chi_{u,e} \cup \chi_{v,e}$ is an open interval.  
 
  \end{enumerate}
  
  We denote each of these three types of open intervals by $\chi_e$ where $e$ is the corresponding edge.   We extend $\phi$ to  $G^{\rm top}$ as follows. Suppose that $\mathcal{I}_e$ is the open line segment in $G^{\rm top}$ corresponding to the edge $e$. We define $\phi|{\mathcal{I}_e}$ to be any homeomorphism between $\mathcal{I}_e$ and $\chi_e$ that when extended to $\bar{\mathcal{I}_e}$ by taking $u$ to $\phi(u)$ and $v$ to $\phi(v)$ induces a homeomorphism between $\bar{\mathcal{I}_e}$ and $\bar{\chi_e}$.  This completes the definition of $\phi$.
  
  Finally, we note that $\phi$ is a homeomorphism between $G^{\rm top}$ and $\mathcal{T}$. We start by noting that, by construction, $\phi$ is a bijection.  Let $N(u)$ be the open neighbourhood $(\cup_{e|u \in e} \mathcal{I}_e) \cup \{u\}$ of the vertex $u \in G^{\rm top}$. Similarly, let $N(\phi(u))$ be the open neighbourhood $(\cup_{e|u \in e} \chi_e) \cup \{\phi(u)\}$ of the trivalent point $\phi(u) \in \mathcal{T}$.  Note that, by construction, the endpoints of $\bar{\chi_e}$ are precisely $\phi(u)$ and $\phi(v)$ for each $e=(u,v)$. Hence, we deduce that $\phi$ induces a homeomorphism between $N(u)$ and $N(\phi(u))$ for every vertex $u$ of $G$.   Consider an open subset $U$ of $\mathcal{T}$ and let $U_u$ be the open subset $U \cap N(\phi(u))$ for a vertex $u$ of $G$. Note that, $\phi^{-1}(U_u)$ is an open subset of $N(u)$ and hence, an open subset of $G^{\rm top}$ for each vertex $u$. Since $\{U_u\}_{ u \in V(G)}$ forms a cover of $U$, we have $\phi^{-1}(U)=\cup_{u} \phi^{-1}(U_u)$. Hence, $\phi^{-1}(U)$ is an open subset of $G^{\rm top}$. 
 This shows that $\phi$ is continuous. The other direction, i.e. that $\phi^{-1}$ is continuous follows exactly analogously. Hence, we conclude that $\phi: G^{\rm top} \rightarrow \mathcal{T}$ is a homeomorphism.   \qed
 
 \begin{remark}

\rm Alternatively, we can also use the fact that a bijective local homeomorphism is a homeomorphism to deduce that $\phi$  is a homeomorphism. \qed
 \end{remark}
 
 We refer to Figure \ref{envelope_fig} for the case when $G$ is the envelope graph. The graph $G$ is shown on the left and the extended tropicalisation of the sch\"on embedding of $X_G$ is shown in thick lines on the right. Note that this tropicalisation is contained in $\mathbb{TP}^3$ which is identified with a three-dimensional simplex. It is contained in two facets of this simplex and these two facets are the visible facets in the figure. 
 The image $\phi(i)$ of $\phi$ on $i$ is denoted by $\phi_i$. The open interval $\chi_{(i,j)}$ is denoted by $\chi_{i,j}$ and is the open interval corresponding to the segment with endpoints $\phi_i$ and $\phi_j$ that appears just below the symbol $\chi_{i,j}$ in the figure.

\begin{figure}
  \centering
  \includegraphics[width=14cm]{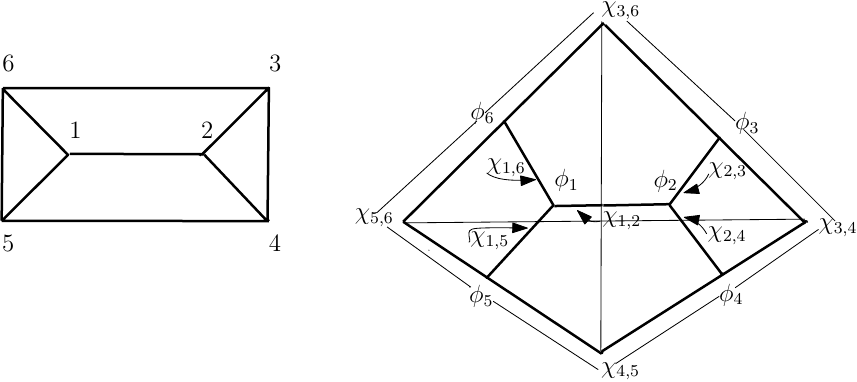}
  \caption{An illustration of the homeomorphism $\phi$ in the case when $G$ is the envelope graph.} \label{envelope_fig}
\end{figure}


\section{Connectivity between Tropicalisations of the Sch\"on Embedding}\label{conn_sect}

In this subsection, we show Theorem \ref{conn_theo} that states that the set of extended tropicalisations of sch\"on embeddings of $X_G$ (as $G$ varies over simple, three-regular, three-connected graphs) is connected via certain ``local'' operations.  
Recall from the introduction that these local operations are  tropical analogues of $\Delta$$Y$, $Y$$\Delta$ and contraction-elongation transformations, and are motivated by Steinitz' theorem.    We start by recalling Steinitz' theorem \cite[Chapter 4]{Zie07} and a part of this standard proof.

\begin{theorem}{\rm ({\bf Steinitz' Theorem})}\label{Stein_thm}
 A  graph is the one-skeleton of a three-dimensional polytope if and only if it is simple, planar and three-vertex-connected.
  \end{theorem}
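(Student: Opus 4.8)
The plan is to prove both directions, with essentially all of the content lying in the ``if'' direction. For the ``only if'' direction I would first observe that the one-skeleton of any three-dimensional polytope $P$ is planar: projecting $\partial P$ from a point just beyond the relative interior of a chosen facet (equivalently, taking a Schlegel diagram) produces a straight-line planar drawing of the graph. Three-connectivity is the three-dimensional case of Balinski's theorem; for $d=3$ one can argue directly that, after deleting any two vertices, the remaining vertices are still joined by paths routed along the topological sphere $\partial P$, since removing two points never disconnects the relevant boundary structure.

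For the ``if'' direction I would follow the classical inductive scheme built on $\Delta Y$ and $Y\Delta$ transformations, inducting on a monotone complexity measure of the graph (for instance the number of edges, or a finer potential that strictly decreases under a suitable reduction). The combinatorial heart is a reduction lemma: every simple, three-connected planar graph $G$ other than $K_4$ admits either a $Y\Delta$ move, replacing a degree-three vertex by a triangle, or a $\Delta Y$ move, replacing a triangular face by a degree-three vertex, possibly combined with the deletion of a resulting parallel edge or the suppression of a degree-two vertex, producing a strictly smaller graph that is again simple, three-connected, and planar. I would establish this by a local case analysis: Euler's formula forces the existence of a vertex of degree three or of a triangular face, and three-connectivity controls how these features interact, which is what guarantees that iterating the reduction terminates at $K_4$.

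With the reduction in hand, I would construct the polytope by reversing the sequence. The base case is immediate, since $K_4$ is realized by the tetrahedron. For the inductive step I would realize each inverse transformation geometrically: the inverse of a $Y\Delta$ move is vertex truncation, where slicing off a degree-three vertex $v$ by a hyperplane close to $v$ creates a small triangular facet and splits $v$ into three new vertices; the inverse of a $\Delta Y$ move is stacking, where erecting a shallow pyramid over a triangular facet introduces a new degree-three vertex. In both cases the global combinatorics are preserved away from the local site, so the realized face lattice matches $G$.

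The main obstacle, and the only step requiring genuine care, is maintaining convexity. When reversing a $\Delta Y$ move by stacking, the apex must be placed so that no previously coplanar facets are forced to fold the wrong way; when truncating, the cut must be deep enough to realize the intended combinatorics yet shallow enough not to reach neighboring vertices. Guaranteeing that an admissible placement always exists calls for a local perturbation argument, and in general one first applies a projective transformation sending the relevant facet toward infinity (or otherwise normalizing the local geometry) so that the pyramid or truncation can be performed in a region where convexity is automatic. Organizing these projective normalizations consistently along the whole reduction sequence, and verifying at each step that the result remains a genuine convex polytope with the correct face lattice, is the technical crux; everything else is bookkeeping driven by the reduction lemma.
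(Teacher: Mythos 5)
Your proposal is correct and takes essentially the same route as the paper, which states Steinitz' theorem as a classical result and recalls precisely this proof from \cite{Zie95}: reduce every three-connected planar graph to $K_4$ by $\Delta$$Y$ transformations, realize $K_4$ by the tetrahedron, and reverse the sequence geometrically while preserving realizability. The only point to watch in your inductive step is that the geometric inverse of a $\Delta$$Y$ move is not an arbitrary shallow pyramid: the apex must be the intersection point of the three facet planes adjacent to the triangular facet (placed on the correct side, after the projective normalization you describe), since otherwise the triangle's edges survive in the face lattice and the wrong graph is realized.
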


A key ingredient in the proof is the notion of $\Delta$$Y$ transformation, i.e. replace a $\Delta$-subgraph (a triangular face) by a $Y$-subgraph. More formally, a $\Delta$$Y$ transformation replaces a triangle that bounds a face by a three-star that connects the same set of vertices.  This operation is reversed for a $Y$$\Delta$ transformation. Figure \ref{delta-Y} illustrates the transformations in the case of three-regular graphs, the situation that is relevant for us.

\begin{figure}
  \centering
  \includegraphics[width=8cm]{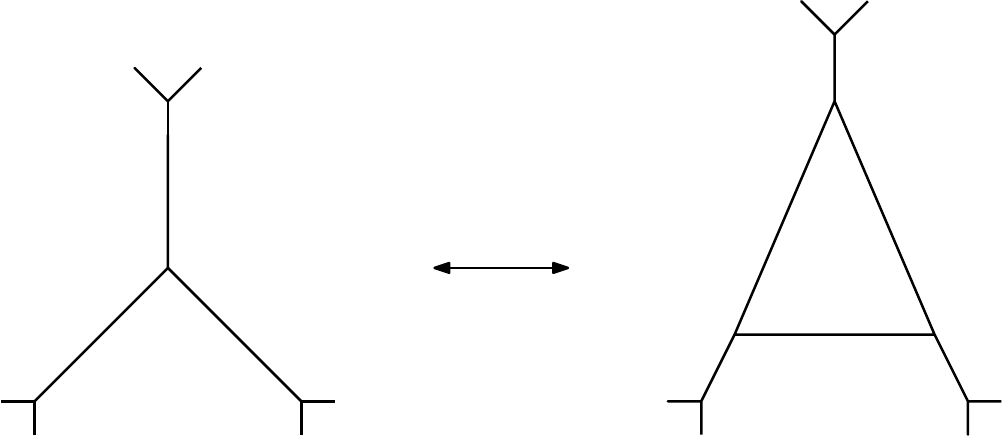}
  \caption{$Y$$\Delta$ and $\Delta$$Y$ transformations.}\label{delta-Y}
\end{figure}

 A simple $Y$$\Delta$ transformation is a $Y$$\Delta$ transformation followed by edge contractions to eliminate valence two vertices and replacing each set of resulting parallel edges by corresponding single edges.    A key step in the proof is to show that every three-vertex-connected planar graph can be obtained from $K_4$ by a sequence of simple $Y$$\Delta$ transformations.

We take cue from this part of the proof. Since we are concerned with \emph{three-regular} planar graphs rather than arbitrary planar graphs, we employ a sequence of ``local operations'' that transform a three-regular, three-connected planar graph to $K_4$ that keep the properties three-regular and three-connected invariant.  We perform the following two operations: 
 
 \begin{enumerate}
\item  $\Delta$$Y$ (and $Y$$\Delta$) transformations.
\item  \emph{Contraction-elongation} transformations, as shown in Figure \ref{elong-contra}. We say that the operation is performed along the edge $e_1$.
\end{enumerate}

\begin{figure}
  \centering
  \includegraphics[width=14cm]{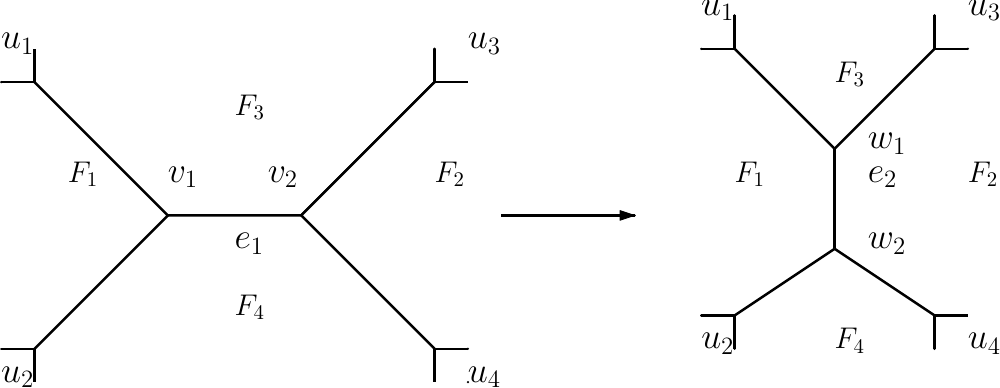}
  \caption{Contraction-elongation transformation.}\label{elong-contra}
\end{figure}

We show that for any three-regular, three-connected planar graph, there is a sequence of $\Delta$$Y$ and contraction-elongation transformations that transforms it to $K_4$ while maintaining the three-regularity and the three-connectivity properties (Lemma \ref{reach_lem}).  We then introduce tropical analogues of $\Delta$$Y$ and contraction-elongation transformations.   We show that any $\Delta$$Y$ transformation on $G$ induces its tropical analogue on the sch\"on embedding of $G$ (Propositions \ref{tropDeltaY_prop} and \ref{tropDeltaY-edge_prop}). Proposition \ref{tropcon-elon_prop} shows an analogous statement for the contraction-elongation transformation.  Theorem \ref{conn_theo} then follows as corollary.  The following characterisation of three-regular, three-connected planar graphs turns out be useful.


 \begin{lemma}\label{threeconnchar_lem}
Let $G$ be a simple, three-regular, two-edge-connected planar graph.  The graph $G$ is three-edge-connected if and only if for any planar embedding of $G$, no two non-adjacent exterior vertices share an interior face. 
 \end{lemma}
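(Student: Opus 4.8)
The plan is to prove both implications in contrapositive form, after two preliminary reductions. First, since $G$ is $3$-regular and bridgeless it has no cut vertex: a cut vertex would have to send at least one of its three edges into a component attached only through it, and that edge would then be a bridge. Hence $G$ is $2$-vertex-connected, and in any plane embedding every face is bounded by a cycle. Second, recall that for simple $3$-regular graphs three-edge-connectivity and three-vertex-connectivity coincide (\cite[Lemma 2.6]{BayEis91}), so I may pass freely between edge- and vertex-cuts of size two. As $G$ is bridgeless, ``not three-edge-connected'' is the same as ``possessing a $2$-edge-cut.''

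For the direction that forces the face condition I would argue the contrapositive: suppose some plane embedding has non-adjacent exterior vertices $u,v$ lying together on an interior face $f$, and show that $G$ is not three-connected. Let $O$ denote the outer face. Choose an arc $\beta$ from $u$ to $v$ through the interior of $f$ and an arc $\alpha$ from $u$ to $v$ through the interior of $O$; since $f \neq O$, the two arcs meet only at their endpoints, so $\gamma = \alpha \cup \beta$ is a Jordan curve meeting $G$ exactly in $\{u,v\}$. The crucial point is that \emph{both} open regions cut out by $\gamma$ contain a point of $G$: otherwise such a region, being a connected subset of the complement of $G$, would lie in a single face, whereas its two boundary arcs $\alpha,\beta$ lie in $O$ and $f$ respectively, forcing $O=f$. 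Because $\gamma$ meets $G$ only at $u,v$, no edge crosses $\gamma$, so any path in $G-\{u,v\}$ joining an inside vertex to an outside vertex would have to cross $\gamma$ at $u$ or $v$, which is impossible. Thus $\{u,v\}$ is a $2$-vertex-cut and $G$ is not three-connected.

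For the converse I would again use the contrapositive: assuming $G$ has a $2$-edge-cut, I would produce an embedding violating the face condition. Take a minimal such cut (a bond) $\{e_1,e_2\}$; by planar duality a size-two bond corresponds to a length-two cycle of the dual, i.e.\ two faces $f_1,f_2$ that both carry $e_1$ and $e_2$ on their boundary (equivalently, a small curve crossing only $e_1,e_2$ runs through exactly these two faces). Writing $e_1=ab$, $e_2=cd$ with $a,c$ on one side $A$ of the cut and $b,d$ on the other side $B$, I first check that $a,b,c,d$ are four distinct vertices: if $e_1,e_2$ shared a vertex $a$, then $a$ would retain only a single edge into its own side, and deleting that edge would disconnect the remainder of that side, yielding a bridge and contradicting bridgelessness. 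Now $a$ and $d$ lie on opposite sides of the cut, so $\{a,d\}$ is not one of the cut edges; since every $A$--$B$ edge must be a cut edge, $a$ and $d$ are non-adjacent. Both are endpoints of cut edges, hence lie on the common boundary of $f_1$ and $f_2$. Re-embedding so that $f_1$ becomes the outer face (any face of a plane graph can be sent to infinity), the face $f_2$ is interior and $a,d$ are non-adjacent exterior vertices sharing it, as required.

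The main obstacle I anticipate is the topological separation step in the first direction: one must guarantee that $\gamma$ genuinely separates vertices of $G$ on both sides rather than bounding an empty region, which is precisely where the hypotheses that $f\neq O$ and that $u,v$ are non-adjacent (so that $\gamma$ can be drawn meeting $G$ only at $u,v$) are used. The remaining work is bookkeeping in the second direction --- confirming that the four cut-endpoints are distinct and that a cross pair such as $\{a,d\}$ is simultaneously non-adjacent and incident to both shared faces.
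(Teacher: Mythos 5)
Your proof is correct, and in the converse direction it is essentially the paper's argument executed with more care, while in the forward direction it takes a genuinely different route. The paper stays entirely with edge cuts: from the violating pair $u,v$ it deletes the two edges $e_1,e_2$ incident on $u$ and $v$ that are shared by the common interior face and the outer face, and these two edges form a $2$-edge-cut (non-adjacency of $u,v$ enters only to guarantee $e_1\neq e_2$). You instead exhibit $\{u,v\}$ directly as a $2$-vertex-cut via a Jordan curve through the two faces, and then pass to edge connectivity using the equivalence of three-edge- and three-vertex-connectivity for cubic graphs \cite[Lemma 2.6]{BayEis91}; this is legitimate --- the paper itself records that equivalence in a remark --- but it imports an external lemma where the paper's edge-cut version is self-contained, and it leaves one small step implicit: your separation argument puts a \emph{point} of $G$ in each region of $\gamma$, and promoting this to a \emph{vertex} needs the observation that an edge whose interior lies in a region has an endpoint there unless it joins $u$ to $v$, which simplicity plus non-adjacency rules out. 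In the converse direction your skeleton matches the paper's (the two cut edges cannot share a vertex, else the third edge at that vertex is a bridge; the cut edges bound common faces whose boundary carries the non-adjacent endpoints), but you supply details the paper leaves to a figure: the cut--cycle duality producing the two faces $f_1,f_2$ common to $e_1$ and $e_2$, the explicit check that a cross pair of endpoints such as $\{a,d\}$ is non-adjacent, and, most importantly, the re-embedding making $f_1$ the outer face. The paper's argument tacitly assumes the cut edges already border the exterior face, which need not hold in an arbitrary embedding; since the negation of the face condition only requires \emph{some} embedding to violate it, your re-embedding step is exactly what makes that direction airtight, so your version buys rigor at the cost of one borrowed lemma, while the paper's buys brevity and self-containment on the forward implication.
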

 
   \begin{proof}($\Rightarrow$)  Suppose that there is a planar graph $G$ with a planar embedding such that there exist two non-adjacent exterior vertices $u$ and $v$  that share an interior face $F$. Since $G$ is three-regular, for each of these vertices there is a unique edge incident on it  that is shared by both the interior face $F$ and the exterior face.  Suppose that  $e_1$ and $e_2$ are such edges incident on $u$ and $v$, respectively.  We claim that deleting edges $e_1$ and $e_2$ will disconnect the graph.  To see this, consider an arc $\mathcal{A}_1$ with an interior point in $e_1$ and an interior point in $e_2$ as end points and contained in the closure of the exterior face.  Consider another arc $\mathcal{A}_2$ with the same end points contained in the closure of $F$. The interior and exterior regions of the closed curve $\mathcal{A}_1 \cup \mathcal{A}_2$ both intersect $G$ non-trivially, and $\mathcal{A}_1 \cup \mathcal{A}_2$ does not intersect $G \setminus \{e_1,e_2\}$. We conclude that $G \setminus \{e_1,e_2\}$ is disconnected. Hence, $G$ is not three-edge-connected. 

($\Leftarrow$) Conversely, suppose that $G$ is not a three-edge-connected graph. Suppose that deleting edges $e_1$ and $e_2$ disconnects $G$.
  The edges $e_1$ and $e_2$ cannot share a vertex $v$ since this implies that the other edge incident on $v$ is a bridge.  Hence in any planar embedding of $G$, the edges $e_1$ and $e_2$ bind an interior face $F$ of $G$, and are both contained in the exterior face, see Figure \ref{intface_fig}. The vertices $v_1$ and $u_2$ are not adjacent, are both exterior vertices and share an interior face. \qedhere

\end{proof}
\begin{figure}   
  \centering
\includegraphics[width=12cm]{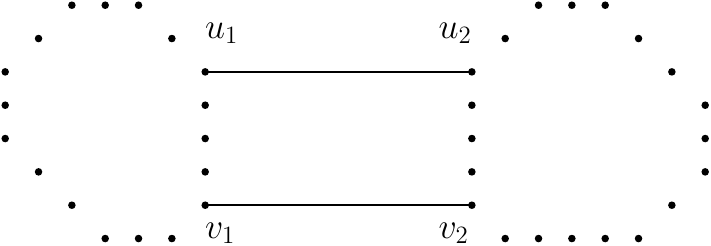}
\caption{Two exterior vertices sharing an interior face.}\label{intface_fig}
 \end{figure}

\begin{figure}
 \centering
  \includegraphics[width=4cm]{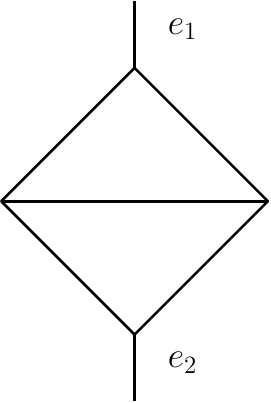}
  \caption{Forbidden subgraph.}\label{forbid-sub}
\end{figure}

\begin{lemma}\label{reach_lem} Every three-regular, three-connected planar graph $G$ can be  transformed to $K_4$ by a sequence of $\Delta$$Y$  and contraction-elongation transformations such that the graph at each step remains a  simple, three-regular, three-connected planar graph. \end{lemma}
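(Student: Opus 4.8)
The plan is to prove Lemma \ref{reach_lem} by induction on the genus $g$ of $G$ (equivalently, on the number of interior faces of a planar embedding, or on the number of vertices). The base case is $G = K_4$, which requires no operations. The key structural tool is the classical fact, used in the standard proof of Steinitz' theorem, that every three-connected planar graph other than $K_4$ admits a $\Delta Y$ (or its inverse) reduction; here I must refine this so that the operations keep the graph $3$-regular. The strategy is to argue that in any $3$-regular, three-connected planar graph $G \neq K_4$, one can locate a suitable local configuration on which a $\Delta Y$ transformation followed (when necessary) by the contraction-elongation operation of Figure \ref{elong-contra} produces a strictly smaller graph that is still $3$-regular, three-connected and planar.

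First I would fix a planar embedding of $G$ and analyze the faces. Since $G$ is $3$-regular with $g$ interior faces, Euler's formula pins down the number of vertices and edges, and a counting/discharging argument (the usual one for $3$-regular planar graphs) guarantees the existence of a face of small size to act on. The idea is to perform a $\Delta Y$ transformation at a well-chosen location: replacing a triangular face by a $Y$ reduces the combinatorial size while preserving $3$-regularity locally. A bare $\Delta Y$ step can, however, create multi-edges or valence-two vertices, or it can move us outside the $3$-regular, three-connected planar world; this is exactly what the contraction-elongation operation is designed to repair. So each reduction step is really the composite ``$\Delta Y$ then contraction-elongation on the offending edge $e_1$,'' and I would check that this composite sends $3$-regular three-connected planar graphs to $3$-regular three-connected planar graphs of strictly smaller size.

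The heart of the argument is verifying that three-connectivity is preserved under the composite operation. Here I would lean on the combinatorial characterization in Lemma \ref{threeconnchar_lem}: a simple, $3$-regular, bridgeless planar graph is three-connected precisely when no two non-adjacent exterior vertices share an interior face. So after performing the operation, it suffices to choose the planar embedding and the edge $e_1$ so that the resulting embedding has no two non-adjacent exterior vertices sharing an interior face. The contraction-elongation operation gives exactly the freedom needed to eliminate any such ``bad'' pair that the $\Delta Y$ step might introduce, since it locally rearranges how edges bound the exterior and interior faces. I would also confirm that the operations cannot create a bridge (which would violate the hypothesis of Lemma \ref{threeconnchar_lem}), using that $3$-regularity together with three-edge-connectivity forbids bridges.

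The main obstacle I anticipate is the case analysis ensuring that a valid reduction always exists and that three-connectivity genuinely survives the composite operation. Unlike the classical Steinitz setting, we are constrained to stay within $3$-regular graphs, so a plain $\Delta Y$ is not self-contained and must be paired correctly with contraction-elongation; verifying that the pairing can always be arranged to avoid the forbidden configuration of Lemma \ref{threeconnchar_lem} is delicate and is where a careful, possibly several-case, argument is needed. Once existence of a single valid reduction step is established, the induction closes immediately: each step strictly decreases $g$ (or the vertex count), so after finitely many steps we reach the minimal $3$-regular three-connected planar graph, namely $K_4$.
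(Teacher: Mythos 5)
Your proposal has the right outer shell (induction on genus, base case $K_4$, preservation of three-connectivity via Lemma \ref{threeconnchar_lem} and the no-bridge argument), but it misassigns the roles of the two operations, and this creates a genuine gap at the very first step. You propose to reduce by performing a $\Delta$$Y$ transformation on a triangular face and then using contraction-elongation to ``repair'' any damage. The problem is that a $3$-regular, three-connected planar graph need not contain any triangle at all: the cube has only quadrilateral faces and the dodecahedron only pentagonal ones, and your Euler/discharging count only guarantees a face of length at most five, not a $\Delta$. So in these cases there is no configuration on which your first move can be performed, and your induction never gets started. The paper resolves exactly this issue by running the operations in the opposite order: when $G$ is triangle-free, it takes an interior face $F$ of minimum length $k$, observes that $F$ has at least $k-3$ interior edges, and performs contraction-elongation on $k-3$ of them to shrink $F$ into a triangle (each such flip moves an edge off the boundary of $F$, decreasing its length by one, while preserving $3$-regularity, planarity, genus, and --- via Lemma \ref{threeconnchar_lem}, since the exterior face and the faces incident to exterior vertices are untouched --- three-connectivity); only then is the $\Delta$ to $Y$ step applied, dropping the genus by one.

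The second, related error is the claimed ``repair'' function of contraction-elongation. If a $\Delta$$Y$ step created a multi-edge, contracting one strand of a double edge would produce a loop, so contraction-elongation cannot fix it; and the suppression of valence-two vertices is already built into the definition of a simple $\Delta$$Y$ transformation, not delegated to contraction-elongation. The paper instead proves that no multi-edge can ever arise: a multi-edge after $\Delta$$Y$ forces the triangle's vertices to induce a $K_4$ minus an edge (Figure \ref{forbid-sub}), and such a subgraph admits a two-edge cut, contradicting three-edge-connectivity. Three-connectivity after the $\Delta$$Y$ step itself is then inherited from the standard Steinitz argument. So the case analysis you flag as the ``main obstacle'' is not merely delicate --- the mechanism you propose for it would fail, and the correct mechanism is the triangle-manufacturing use of contraction-elongation before, not after, the $\Delta$$Y$ move.
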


\begin{proof} Suppose that $G$ has genus three, then it is a $K_4$ and there is nothing to prove. Otherwise, the genus of $G$ is at least four.  We consider a planar embedding of $G$ and perform the following operations on it.

\begin{enumerate}
\item Suppose that $G$ has a triangular face in this embedding then perform a $\Delta$$Y$transformation  on it.

\item \label{contralong_op} Nevertheless, $G$ has an interior face. Consider an interior face $F$ of the minimum length, $k$ say.  By Item \ref{th:four}, Proposition \ref{grapro_prop}, it has at least $k-1$ interior edges (edges not contained in the exterior face).  Perform a sequence of contraction-elongation transformations on any $k-3$ interior edges. This results in (at least) one  triangular face. 

\item Perform a $\Delta$$Y$  transformation on one of these triangular faces.  
\end{enumerate}

We first show that every graph produced by this procedure is simple, three-regular and three-connected.  Any contraction-elongation transformation along an interior edge does not create a bridge. Suppose it does then this implies that the bridge is the edge $e_2$, see Figure \ref{elong-contra}.  Since if any other edge is a bridge, then the corresponding edge in the original graph must be a bridge.   But if $e_2$ is a bridge, then the original graph can be disconnected by deleting two edges. For instance,  deleting the edges $(u_1,v_1)$ and $(u_3,v_2)$ will disconnect the original graph. Furthermore,  this procedure does not alter the exterior face and the set of faces incident on each exterior vertex remains unaltered. Hence, no two non-adjacent exterior vertices can share an interior face after the operation. Furthermore, the  resulting graph remains simple and three-regular. Hence, by Lemma \ref{threeconnchar_lem}, it remains three-connected. Next, we show that the graph resulting from a $\Delta$$Y$ transformation remains simple, three-regular and three-connected. For a multiple edge to occur from a $\Delta$$Y$ transformation, some two vertices of the $\Delta$ must share a neighbour as shown in Figure \ref{forbid-sub}. But this contradicts the three-connectivity of the graph on which the operation is performed, since deleting the edges $e_1$ and $e_2$ would disconnect the graph.  Hence, the graph remains simple. It remains three-regular by construction and by the proof of Steinitz' theorem \cite[Chapter 4, Lemmas 4.2, 4.2*]{Zie07}, the graph remains three-connected. Hence, after each iteration the resulting graph $G'$ is a simple, three-regular, three-connected graph and its genus $g(G')=g(G)-1$.

We repeat the three operations until the genus of the resulting graph is three, this graph must be a $K_4$ since it is the only  simple, three-regular graph of genus three. 
\end{proof} 

In the following, we define tropical analogues of the notion of $Y$$\Delta$ and contraction-elongation transformations. We define these operations on any tropical line arrangement in tropical projective space $\mathbb{TP}^n$ given some additional data. 

 Our primary example in the current article of such a tropical line arrangement is the extended tropicalisation  $\mathcal{T}$ of the sch\"on embedding of $X_G$ where $G$ is a three-regular, three-connected planar graph.  However, these operations can be carried out in greater generality and can be a topic of future investigation. 
Given a finite set $[0,\dots,n]$, consider tropical projective space $\mathbb{TP}^n$ (identified with the $n$-simplex) each of whose facets are labelled by a distinct element in $[0,\dots,n]$. Suppose we fix the following additional data: for each edge $\xi$ of $\mathbb{TP}^n$, we fix a unique point $\rho_{\xi}$  in the relative interior of $\xi$ that we refer to as the \emph{marked point of $\xi$}.   For each two-dimensional face $D$ of $\mathbb{TP}^n$,  there is a unique branched tropical line \footnote{By a ``branched tropical line'', we mean the one-skeleton of the normal fan of a triangle. Note that we do not impose the balancing and the rational slope conditions. } $\mathcal{TL}_D$ contained in $D$ that passes through $\rho_{\xi}$ for each edge $\xi$ that is contained in $D$. This tropical line is the collection of three rays, corresponding to the three marked points, emanating from the origin in the interior of $D$ (note that the interior of $D$ is identified with $\mathbb{R}^2$ via the extended tropicalisation map).   We refer to $\mathcal{TL}_D$ as the \emph{standard tropical line} associated to $D$. 


In the following, we identify $\mathbb{TP}^{g-1}$ with a facet of $\mathbb{TP}^g$. Note that given a two-dimensional face $D$ of $\mathbb{TP}^{g-1}$, there is a unique three-dimensional face $\beta_D$ of  $\mathbb{TP}^g$  that contains $D$ and the unique vertex $\nu$ of $\mathbb{TP}^{g}$ that is not contained in $\mathbb{TP}^{g-1}$.

\begin{definition}{\rm ({\bf Tropical $Y$$\Delta$ Transformation at a Two-Dimensional Face})}
A tropical $Y$$\Delta$ transformation of a tropical line arrangement $\mathcal{T} \subset \mathbb{TP}^{g-1} $ at a two-dimensional face $D$ of $\mathbb{TP}^{g-1}$ such that $\mathcal{T}$ contains the standard line of $D$ is the tropical line arrangement
$\hat{\mathcal{T}}:=(\mathcal{T} \setminus \mathcal{TL}_D) \cup \mathcal{TL}_{D^{(1)}} \cup  \mathcal{TL}_{D^{(2)}} \cup  \mathcal{TL}_{D^{(3)}}$ where $D^{(1)},~D^{(2)}$ and $D^{(3)}$ are the two-dimensional faces of $\beta_D$ apart from $D$. 
  \end{definition}
  
We refer to Figure \ref{tropDeltaY_fig} for an illustration of a tropical $Y$$\Delta$ transformation at the face $\{2,3,4\}$. The tropical line $a$ is the standard tropical line of $\{2,3,4\}$ and the tropical lines $b,~c$ and $d$ are the standard tropical lines of $\{1,2,4\},~\{1,2,3\}$ and $\{1,3,4\}$ respectively.

Given an edge $\xi$ of $\mathbb{TP}^{g-1}$, there is a unique two-dimensional face $D_{\xi}$ of $\mathbb{TP}^g$ that contains $\xi$ and the vertex $\nu$ of $\mathbb{TP}^{g}$ not in $\mathbb{TP}^{g-1}$. 

\begin{definition}{\rm ({\bf Tropical $Y$$\Delta$ Transformation at an Edge)}}
A tropical $Y$$\Delta$ transformation of $\mathcal{T}$ at an edge $\xi$ of $\mathbb{TP}^{g-1}$ that is also contained in $\mathcal{T}$ is the tropical line arrangement $\hat{\mathcal{T}}=(\mathcal{T} \setminus \xi) \cup \mathcal{TL}(D_{\xi}) \cup \xi^{(1)} \cup \xi^{(2)}$ where $\xi^{(1)}$ and $\xi^{(2)}$ are the two edges of $D_{\xi}$ apart from $\xi$.
\end{definition}
Figure \ref{tropDeltaY-edge_fig} illustrates a tropical $Y$$\Delta$ transformation at the edge $\xi$, the tropical line $a$ is the standard tropical line of $D_{\xi}$.

For any pair of two-dimensional faces $(D_1,D_2)$ of $\mathcal{TP}^{g-1}$ that shares a common edge,  let $\beta_{D_1,D_2}$ be the unique three-dimensional face of $\mathcal{TP}^{g-1}$ that contains both $D_1$ and $D_2$. 

\begin{definition}{\rm ({\bf Tropical Contraction-Elongation Transformation})}
A tropical contraction-elongation transformation of a tropical line arrangement $\mathcal{T}$ at a pair of two-dimensional faces $(D_1,D_2)$ of $\mathbb{TP}^{g-1}$ that shares a common edge and such that $\mathcal{T}$ contains both $\mathcal{TL}_{D_1}$ and $\mathcal{TL}_{D_2}$ is defined as the tropical line arrangement $\mathcal{T} \setminus (\mathcal{TL}_{D_1} \cup \mathcal{TL}_{D_2}) \cup \mathcal{TL}_{\bar{D_1}}  \cup  \mathcal{TL}_{\bar{D_2}}$ where $\bar{D_1}$ and $\bar{D_2}$ are the two-dimensional faces of $\beta_{D_1,D_2}$ apart from $D_1$ and $D_2$.
\end{definition}
\begin{figure}
\centering  
\includegraphics[width=12cm]{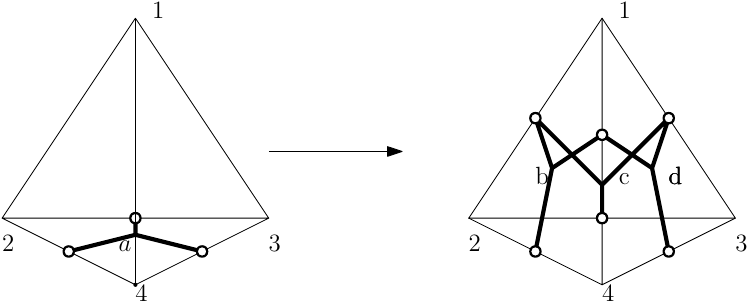}
  \caption{A tropical $Y$$\Delta$ transformation at a two-dimensional face.}\label{tropDeltaY_fig}
\end{figure}

\begin{figure}
\centering  
\includegraphics[width=12cm]{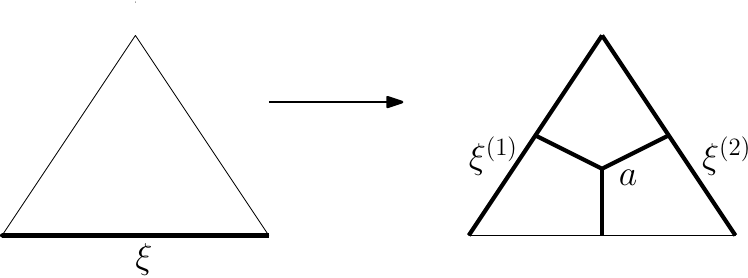}
  \caption{A tropical $Y$$\Delta$ transformation at an edge.}\label{tropDeltaY-edge_fig}
\end{figure}

\begin{figure}
\centering
  \includegraphics[width=12cm]{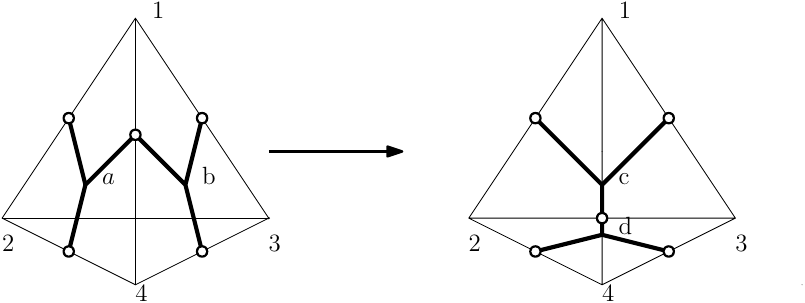}
  \caption{A tropical contraction-elongation transformation.}\label{tropcontraelong_fig}
\end{figure}

We refer to Figure \ref{tropcontraelong_fig} for an example. The tropical lines $a$ and $b$ are the standard tropical lines of the faces $\{1,2,4\}$ and $\{1,3,4\}$ respectively and the tropical lines $c$ and $d$ are the standard tropical lines of $\{1,2,3\}$ and $\{2,3,4\}$ respectively.

In the following, we relate the $Y$$\Delta$ transformation and the contraction-elongation transformation to their tropical analogues.  The set up is as in Section \ref{tropschoen_sect}. For each edge $\xi$ of $\mathbb{TP}^{g-1}$, we set $\rho_\xi:=m_\xi$ (recall its definition from the paragraph ``Tropicalisation of the Irreducible Components'' in Section \ref{tropschoen_sect}). 
 With this choice of marked points, the standard tropical line of each two-dimensional face $D$ of $\mathbb{TP}^{g-1}$ is the extended tropicalisation of the line defined by $\langle x_{F_i}+x_{F_j}+x_{F_k},~x_F|~F \notin \{F_i,F_j,F_k\} \rangle$ where $F_i,F_j$ and $F_k$ are the three facets of $\mathbb{TP}^{g-1}$ not containing $D$.  In the following, we denote the extended tropicalisation of the sch\"on embedding of $X_{G}$ by $\mathcal{T}_G$.  Recall, from Section \ref{tropschoen_sect}, that corresponding to each interior vertex $u$ of $G$, there is a unique two-dimensional face $D_u$ of $\mathbb{TP}^{g-1}$ that contains ${\rm tropproj}(L_u)$.


\begin{proposition} \label{tropDeltaY_prop}
Suppose that the graph $G_2$ is the result of a $Y$$\Delta$ transformation of $G_1$ at an interior vertex $u$. The extended tropicalisation $\mathcal{T}_{G_2}$  is the result of a tropical $Y$$\Delta$ transformation of $\mathcal{T}_{G_1}$ at $D_u$ with respect to the marked points $\{m_{\xi}\}_{\xi}$.
\end{proposition}
\begin{proof}
Note that $\mathcal{T}_{G_2}=\mathcal{T}_{G_1} \setminus \{ {\rm tropproj}(L_{u})\} \cup {\rm tropproj}(L_{u^{(1)}}) \cup  {\rm tropproj}(L_{u^{(2)}}) \cup {\rm tropproj}(L_{u^{(3)}})$ where $u^{(1)},u^{(2)}$ and $u^{(3)}$ are the three vertices of $\Delta$, the new face that is created by the transformation. Consider the three-dimensional subsimplex $\beta$ of $\mathbb{TP}^g$ whose label is the complement of the set $\{F_i,F_j,F_k,\Delta\}$ where $F_i,F_j$ and $F_k$ are the three interior faces of $G_1$ that contain $u$. The two-dimensional face $D_u$ is a face of $\beta$. The proof follows from the observation that the other three two-dimensional faces of $\beta$ are precisely $D_{u^{(i)}}$ for each $i$ from one to three. 
\end{proof}
Recall that for an exterior vertex $u$ of $G$, the tropical line ${\rm tropproj}(L_u)$ coincides with an edge of $\mathbb{TP}^{g-1}$. 
\begin{proposition} \label{tropDeltaY-edge_prop}
Suppose that the graph $G_2$ is the result of a $Y$$\Delta$ transformation of $G_1$ at an exterior vertex $u$. The extended tropicalisation $\mathcal{T}_{G_2}$  is the result of a tropical $Y$$\Delta$ transformation of $\mathcal{T}_{G_1}$ at the edge ${\rm tropproj}(L_u)$ with respect to the marked points  $\{m_{\xi}\}_{\xi}$. 
\end{proposition}
The proof of Proposition  \ref{tropDeltaY-edge_prop} is analogous to the proof of Proposition \ref{tropDeltaY_prop}.  

\begin{proposition}\label{tropcon-elon_prop}
Suppose that the graph $G_2$ is the result of a contraction-elongation transformation of $G_1$ along the edge $e=(u,v)$ (both $u$ and $v$ are interior vertices). The extended tropicalisation $\mathcal{T}_{G_2}$  is the result of a tropical contraction-elongation transformation of $\mathcal{T}_{G_1}$ at the pair $(D_u,D_v)$ with respect to the marked points $\{m_{\xi}\}_{\xi}$.
\end{proposition}
\begin{proof}
The proof is analogous to the proof of Proposition \ref{tropDeltaY_prop}. Suppose that $F_i,~F_k$ are the two  faces incident on $e$ and that $F_j$ ($F_l$, respectively) is the other face incident on $u$ ($v$, respectively). Consider the three-dimensional subsimplex $\beta$ of $\mathbb{TP}^{g-1}$ that is labelled by the complement of the set $\{F_i,F_j,F_k,F_l\}$. Suppose that $D_i,D_j,D_k$ and $D_l$ are the four two-dimensional faces of $\beta$ defined by the property that the label of $D_r$ additionally contains $F_r$ for $r \in \{i,j,k,l\}$.  The proof follows from the observation that ${\mathcal T}_{G_2}=\mathcal{T}_{G_1}\setminus ({\mathcal{TL}}(D_j) \cup {\mathcal{TL}}(D_l) ) \cup (\mathcal{TL}(D_i) \cup \mathcal{TL}(D_k))$.
\end{proof}

\begin{definition}
Tropical line arrangements $\mathcal{T}_1$ and $\mathcal{T}_2$ in tropical projective space are said to be  \emph{related by a tropical $\Delta$$Y$ transformation} if one of them, $\mathcal{T}_2$ say, is a tropical $Y$$\Delta$ transform of the other. 
We say that $\mathcal{T}_1$ is a tropical $\Delta$$Y$ transform of $\mathcal{T}_2$.  
\end{definition}

As a corollary to Propositions \ref{tropDeltaY_prop} and \ref{tropDeltaY-edge_prop}, we obtain:
\begin{corollary}
Suppose that the graph $G_1$ is a $\Delta$$Y$ transform of $G_2$. The extended tropicalisation $\mathcal{T}_{G_1}$ is a tropical $\Delta$$Y$ transform of $\mathcal{T}_{G_2}$ with respect to the marked points $\{m_{\xi}\}_{\xi}$. 
\end{corollary}





As a corollary to Lemma \ref{reach_lem} and the correspondence between  $\Delta$$Y$, $Y$$\Delta$, contraction-elongation transformations and their respective tropical analogues, we obtain Theorem \ref{conn_theo}.   


\begin{figure}
\centering
  \includegraphics[width=12cm]{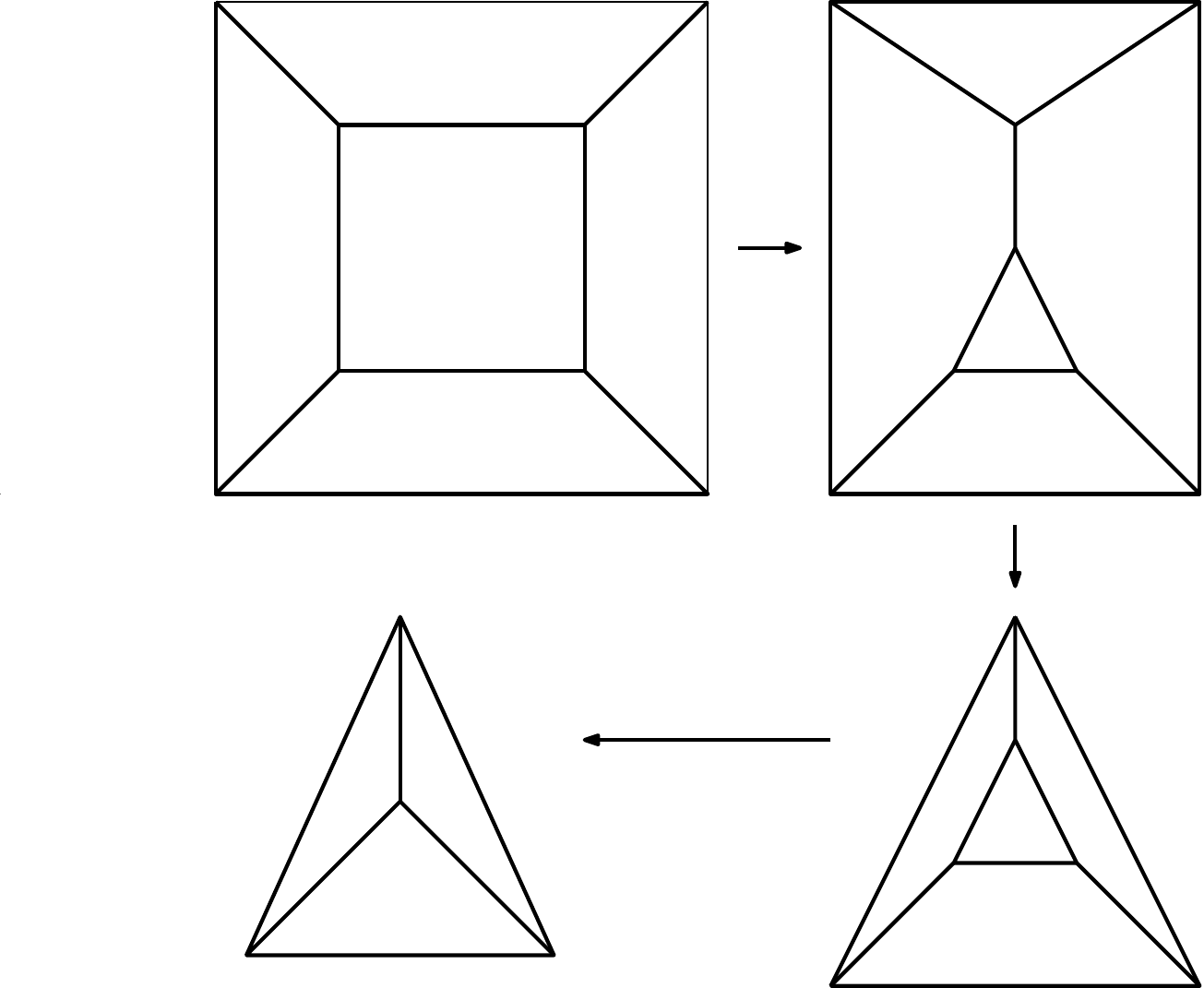}
  \caption{A sequence of $Y$$\Delta$  and contraction-elongation transformations.}\label{excongra_fig}
\end{figure}

\begin{figure}
\centering
  \includegraphics[width=12cm]{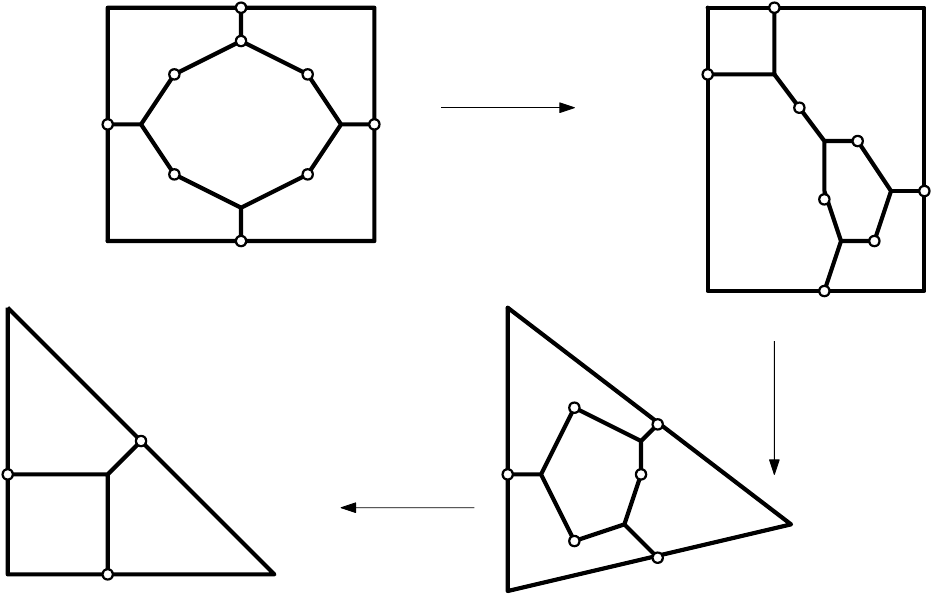}
  \caption{The corresponding tropical operations.}\label{exconn_fig}
\end{figure}

\begin{example}\label{cube_ex} \rm Consider the one-skeleton of the three-dimensional cube. A sequence of transformations following Lemma \ref{reach_lem} to transform it into $K_4$  is shown in Figure \ref{excongra_fig}. This sequence is the one-skeleton of the following polytopes: 

\begin{center}cube $\rightarrow$  triangular prism sliced at a vertex $\rightarrow$  triangular prism $\rightarrow$ tetrahedron. \end{center}

The tropical counterpart of the sequence is shown in Figure \ref{exconn_fig}. \qed

\end{example}

\footnotesize
\noindent {\bf Author's address:}

\smallskip

\noindent Department of Mathematics,\\
Indian Institute of Technology Bombay,\\
Powai, Mumbai,
India 400076.\\
Email: madhu@math.iitb.ac.in, madhusudan73@gmail.com

\end{document}